\documentclass[12pt]{amsart}
\usepackage{amssymb}

\theoremstyle{plain}
\newtheorem{thm}{Theorem}[section]

\newtheorem{corollary}[thm]{Corollary}

\theoremstyle{definition}
\newtheorem{remark}[thm]{Remark}

\numberwithin{equation}{section}

\def\Rn{\mathbb{R}^n}

\def\sup{\operatornamewithlimits{sup}}

\def\esssup{\operatornamewithlimits{ess\,sup}}
\def\essinf{\operatornamewithlimits{ess\,inf}}

\def\P{\mathcal P}
\def\B{\mathcal B}


\newtoks\by
\newtoks\paper
\newtoks\book
\newtoks\jour
\newtoks\yr
\newtoks\pages
\newtoks\vol
\newtoks\publ
\def\ota{{\hbox\vol{???}}}
\def\cLear{\by=\ota\paper=\ota\book=\ota\jour=\ota\yr=\ota
\pages=\ota\vol=\ota\publ=\ota}
\def\endpaper{\the\by, \textit{\the\paper},
{\the\jour} \textbf{\the\vol} (\the\yr), \the\pages.\cLear}
\def\endbook{\the\by, \textit{\the\book}, \the\publ.\cLear}
\def\endprep{\the\by, \textit{\the\paper}, \the\jour.\cLear}

\title[Extensions of Rubio de Francia's
extrapolation theorem in]{Extensions of Rubio de Francia's
extrapolation theorem in variable Lebesgue space and application}

\author{Amiran Gogatishvili}
\address{Amiran Gogatishvili \\
Institute of Mathematics of the \\
Academy of Sciences of the Czech Republic \\
\'Zitna 25 \\
115 67 Prague 1, Czech Republic}
\email{gogatish@math.cas.cz}

\author{Tengiz Kopaliani}
\address{Tengiz Kopaliani \\
Department of Mechanics and Mathematics \\
Tbilisi State University \\
Chavchavadze av. 1 \\
0128, Tbilisi, Georgia}
\email{tengiz.kopaliani@tsu.ge}

\keywords{ variable Lebesgue spaces, Rubio de Francia's extrapolation theorem}
 \subjclass{42B25,
42B20, 42B25, 42B10} 

\thanks{The research of the  authors was  supported by Shota Rustaveli National Science Foundation
grants no. 31/48 (Operators in some function spaces and their
applications in Fourier Analysis) and  no. DI/9/5-100/13 (Function spaces, weighted inequalities for integral operators and problems of summability of Fourier series). The research of the  second author was  partly supported by grant no. P201-13-14743S  of the
Grant Agency of the Czech Republic and RVO: 67985840.}

\begin{document}

\begin{abstract}
We obtain one variant of the  extrapolation theorem of Rubio de Fracia for variable exponent Lebesgue spaces.
As a consequence we obtain conditions guarantee boundedness of strongly singular integral operators,
singular integral operators with rough kernels, fractional maximal operators related to spherical means, Bochner-Riesz
operators
in variable Lebesgue spaces.

\end{abstract}

\maketitle

\section{Introduction}

Given a measurable
function $p :\mathbb{R}^{n}\longrightarrow
[1,\infty),\,L^{p(\cdot)}(\mathbb{R}^{n})$ denotes the set of
measurable functions $f$ on $\mathbb{R}^{n}$ such that for some
$\lambda>0$
$$
\int_{\mathbb{R}^{n}}\left(\frac{|f(x)|}{\lambda}
\right)^{p(x)}dx<\infty.
$$
This set becomes  a Banach function space  when equipped with the
norm
$$
\|f\|_{p(\cdot)}=\inf\left\{\lambda>0:\,\,\int_{\mathbb{}}\,
\left(\frac{|f(x)|}{\lambda}\right)^{p(x)}dx\leq 1 \right\}.
$$

The Lebesgue spaces $L^{p(\cdot)}(\mathbb{R}^{n})$ with variable
exponent  and the corresponding variable Sobolev spaces
$W^{k,p(\cdot)}(\mathbb{R}^{n})$ are of interest for their
applications to modeling problems in physics, and to the study of
variational integrals and partial differential equations with
non-standard growth condition (see \cite{DHHR}, \cite{CUF}).The
space $L^{p(\cdot)}(\mathbb{R}^{n})$ have many properties in common
with the standard $L^{p}$ spaces. For use below we highlight the
fact that the associate space of $L^{p(\cdot)}(\mathbb{R}^{n})$ is
$L^{p'(\cdot)}(\mathbb{R}^{n}),$ where the conjugate exponent
function $p'(\cdot)$ is defined by $1/p(x)+1/p'(x)=1$ with
$1/\infty=0.$

Define $\mathcal{P}^{0}$  to be the set of measurable functions $p:
\Rn \to(0,\infty)$ such that
 $$
 p^-=\essinf_{x\in \Rn}p(x)>0,\,\,\,\,p^+=\esssup_{x\in
 \Rn}p(x)<\infty.
 $$

Given $p(\cdot)\in\mathcal{P}^{0},$ we can define the space
$L^{p(\cdot)}(\mathbb{R}^{n})$ as above.  This is equivalent to
defining it to be the set of all functions $f$ such that
$|f|^{p_{0}}\in L^{q(\cdot)}(\mathbb{R}^{n}), $ where
$0<p_{0}<p^{-}$ and $q(\cdot)=p(\cdot)/p_{0}.$ We can define a
quasi-norm on this space by
$$
\|f\|_{p(\cdot)}=\||f|^{p_{0}}\|_{q(\cdot)}^{1/p_{0}}.
$$

By a weight we mean a non-negative, locally integrable function $w$.
Given a weight $w$, $L^p(w)$
will denote the weighted Lebesgue space with norm
$$\|f\|_{p,w}:=\left(\int_{\Rn}|f(x)|^pw(x)dx\right)^{\frac{1}{p}}.$$

Central to the study of weights are the so-called $A_{p}$ weights,
we say $w\in A_{p}$ if there exists
a constant $C$ such that for every cube $Q\subset \Rn$,
\begin{align*}
\frac{1}{|Q|}\int_{Q}w(x)dx\left(\frac{1}{|Q|}\int_{Q}w(x)^{1-p'}dx\right)^{p-1}\leq
C<\infty, \quad &\text{if}\quad  1<p<\infty\\
\intertext{and}
\frac{1}{|Q|}\int_Qw(x)dx\leq C\essinf_{x\in Q}w(x), \quad &\text{if} \quad p=1.
\end{align*}
Hereafter  $\mathcal{F}$ will denote a family of pairs  $(f,g)$ of
non-negative, measurable functions on $\Rn$.  We say that an
inequality
$$
\int_{\mathbb{R}^{n}}f(x)^{p_{0}}w(x)dx\leq
C\int_{\mathbb{R}^{n}}g(x)^{p_{0}}w(x)dx
$$
holds for any $(f,g)\in\mathcal{F}$ and $w\in A_{q}$ (for some
$q,\,\,1\leq q<\infty$), we mean that it holds for any pair in
$\mathcal{F}$ such that the left-hand side is finite, and the
constant $C$ depends only $p_{0}$ and the $A_{q}$ constant of $w.$

Let $B(x,r)$ denote the open ball in $\mathbb{R}^{n}$ of radius $r$
and center $x.$ By $|B(x,r)|$ we denote the $n-$dimensional Lebesgue
measure of $B(x,r).$  The Hardy-Littlewood maximal operator $M$ is
defined on locally integrable functions $f$ on $\mathbb{R}^{n}$ by
the formula
$$
Mf(x)=\sup_{r>0}\frac{1}{|B(x,r)|}\int_{B(x,r)}|f(y)|dy.
$$

Let $\mathcal{B}(\mathbb{R}^{n})$ be the set of exponents such that
Hardy-Littlewood maxcimal operator is bounded on
$L^{p(\cdot)}(\mathbb{R}^{n}).$ (For  information on exponents from
$\mathcal{B}(\mathbb{R}^{n})$ see the  monographs \cite{CUF, DHHR}).

In \cite{CUFMP} the authors  extending the classical extrapolation
method of Rubio de Francia (\cite{RF1},\cite{RF2},\cite{RF3}) for
variable exponent Lebesgue spaces and  showed that many classical
operators in harmonic analysis such as singular integrals,
commutators and fractional integrals are bounded on the variable
Lebesgue spaces $L^{p(\cdot)}$ whenever the Hardy-Littlewood maximal
operator is bounded on $L^{p(\cdot)}.$

\begin{thm} $($\cite{CUMP},\cite{CUFMP}$)$ \label{thm1.1} Given a family $\mathcal{F},$ suppose that for some
  $p_0 $,  $0< p_0 <\infty,$ and for every weight  $w\in A_{1}$,
$$
\int_{\Rn} f(x)^{p_0} w(x)dx\le C_0 \int_{\Rn} g(x)^{p_0}
w(x)dx,\,\,\,\,\,(f,g)\in\mathcal{F},
$$
where $C_0$ depends only $p_0$ and the $A_1$ constant of $w$. 
Let $p(\cdot)\in \mathcal{P}^{0}$ be such that $p_0<p_{-}$, and  $(p(\cdot)/p_{0})'\in \mathcal{B}(\mathbb{R}^{n}).$
 Then for all
$(f,g)\in\mathcal{F}$ such that $f\in L^{p(\cdot)}(\mathbb{R}^{n})$,
$$\|f\|_{p(\cdot)}\le C\|g\|_{p(\cdot)},$$
where the constant $C$ is independent of the pair $(f,g)$.
\end{thm}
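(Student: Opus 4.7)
The plan is to run the standard Rubio de Francia iteration, but in the variable exponent setting, using duality in the scaled space $L^{p(\cdot)/p_0}$.

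First I would set $q(\cdot)=p(\cdot)/p_0$. Since $p^-> p_0$, we have $q^->1$, so $L^{q(\cdot)}$ is a Banach function space whose associate is $L^{q'(\cdot)}$, and by hypothesis $M$ is bounded on $L^{q'(\cdot)}$; call its operator norm $\|M\|_{q'(\cdot)}$. The identity $\|f\|_{p(\cdot)}^{p_0}=\|f^{p_0}\|_{q(\cdot)}$ together with the norm-conjugate (associate space) formula gives
\[
\|f\|_{p(\cdot)}^{p_0}\;=\;\sup\left\{\int_{\Rn} f(x)^{p_0}\,h(x)\,dx\ :\ h\ge 0,\ \|h\|_{q'(\cdot)}\le 1\right\}.
\]

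For each such $h$, I would define the Rubio de Francia operator
\[
\RR h(x)\;=\;\sum_{k=0}^{\infty}\frac{M^{k}h(x)}{2^k\,\|M\|_{q'(\cdot)}^{\,k}},
\]
where $M^k$ is the $k$-th iterate of $M$ (with $M^0h=h$). Three standard properties follow directly from this construction: (i) $h\le \RR h$ pointwise; (ii) $\|\RR h\|_{q'(\cdot)}\le 2\|h\|_{q'(\cdot)}$, because the series telescopes in norm; and (iii) $M(\RR h)\le 2\|M\|_{q'(\cdot)}\,\RR h$, so $\RR h\in A_1$ with constant depending only on $\|M\|_{q'(\cdot)}$, hence independent of $h$.

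Next I would apply the hypothesis. Because $f\in L^{p(\cdot)}$, Hölder's inequality in the variable Lebesgue scale together with property (ii) gives
\[
\int_{\Rn} f^{p_0}\,\RR h\;\le\;\|f^{p_0}\|_{q(\cdot)}\,\|\RR h\|_{q'(\cdot)}\;\le\;2\,\|f\|_{p(\cdot)}^{p_0}<\infty,
\]
so the left-hand side of the weighted inequality with weight $w=\RR h$ is finite, and the hypothesis yields
\[
\int_{\Rn} f^{p_0}\,h\;\le\;\int_{\Rn} f^{p_0}\,\RR h\;\le\;C_0\int_{\Rn} g^{p_0}\,\RR h\;\le\;2C_0\,\|g\|_{p(\cdot)}^{p_0}\,\|h\|_{q'(\cdot)}.
\]
Taking the supremum over admissible $h$ and extracting the $p_0$-th root produces the desired inequality $\|f\|_{p(\cdot)}\le C\|g\|_{p(\cdot)}$.

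The main obstacle I anticipate is not any single hard estimate but the verification that all ingredients line up: that $L^{q(\cdot)}$ with $q^->1$ really does admit the associate-space duality used in the first display, and that the finiteness condition in the definition of $A_1$-weighted inequalities (the hypothesis only asserts a bound when the left-hand side is finite) is genuinely met at the decisive step. Both are handled by the chain of Hölder estimates above, which relies crucially on the assumption $f\in L^{p(\cdot)}$ and on $(p(\cdot)/p_0)'\in\B(\Rn)$. Beyond that, the argument is the verbatim translation of Rubio de Francia's classical algorithm into the variable Lebesgue framework.
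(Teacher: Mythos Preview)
Your proposal is correct and follows exactly the Rubio de Francia iteration scheme that the paper employs in its proof of the more general Theorem~\ref{thm1.4}; the paper does not give a separate proof of Theorem~\ref{thm1.1} (it is cited from \cite{CUMP,CUFMP}), but specializing the paper's argument to $\delta=1$ and $p_0=q_0$ reproduces your proof verbatim, including the duality in $L^{p(\cdot)/p_0}$, the definition of $\mathcal{R}h$, the three properties (i)--(iii), the finiteness check via H\"older, and the concluding estimate.
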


 "Of-diagonal" generalization of Theorem 1.1 is following theorem (In
 the classical setting the extrapolation theorem of Rubio de Francia
 was extended in this manner by  Harboure, Mac\'{\i}as and Segovia
 \cite{HMS}).

\begin{thm} $($\cite{CUMP},\cite{CUFMP}$)$
 Given a family $\mathcal{F}$, assume that for some $p_{0}$ and $q_{0}$  , $0<p_{0}\leq q_{0}<\infty,$
 and every weight $w\in A_{1},$
 $$
 \left(\int_{\mathbb{R}^{n}}f(x)^{q_{0}}w(x)dx\right)^{1/q_{0}}\leq
 C_0\left(\int_{\mathbb{R}^{n}}g(x)^{p_{0}}w(x)^{ p_{0}/q_{0}}dx\right)^{1/p_{0}},\,\,\,\,\,(f,g)\in\mathcal{F}.
 $$
Given $p(\cdot)\in \mathcal{P}^{0}$ such that $p_{0}<p_{-}\leq
p_{+}<\frac{p_{0}q_{0}}{q_{0}- p_{0}},$ define the function
$q(\cdot)$ by
$$
\frac{1}{p(x)}-\frac{1}{q(x)}=\frac{1}{p_{0}}-\frac{1}{q_{0}},\,\,\,x\in\mathbb{R}^{n}.
$$
If $(q(\cdot)/q_{0})'\in \mathcal{B}(\mathbb{R}^{n}),$ then for all
$(f,g)\in\mathcal{F}$ such that $f\in L^{p(\cdot)}(\mathbb{R}^{n}),$
$$
\|f\|_{q(\cdot)}\leq C\|g\|_{p(\cdot)}.
$$
\end{thm}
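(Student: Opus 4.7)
The strategy is to reduce the off-diagonal statement to a weighted testing inequality against arbitrary $A_1$ weights, exactly in the spirit of the proof of Theorem 1.1, by working with $|f|^{q_0}$ in the scaled variable space $L^{q(\cdot)/q_0}$ and dualizing. Set $r(\cdot) = q(\cdot)/q_0$. The hypothesis $p_0 < p_-$ gives $q(\cdot) \geq q_0$, and $p_+ < p_0 q_0/(q_0-p_0)$ guarantees $q_+ < \infty$, so $r(\cdot) \in \mathcal{P}^0$ with $r_- \geq 1$; moreover its associate exponent $r'(\cdot) = (q(\cdot)/q_0)'$ belongs to $\mathcal{B}(\mathbb{R}^n)$ by hypothesis. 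By the norm-associate duality for variable Lebesgue spaces,
$$\|f\|_{q(\cdot)}^{q_0} = \bigl\||f|^{q_0}\bigr\|_{r(\cdot)} \leq C\sup_h \int_{\Rn} |f(x)|^{q_0} h(x)\,dx,$$
the supremum running over non-negative $h$ with $\|h\|_{r'(\cdot)} \leq 1$. As usual, one truncates $f$ at the start so that the left side is finite and the manipulations below are justified, then removes the truncation at the end using the assumption $f \in L^{p(\cdot)}(\Rn)$.

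Next, for fixed admissible $h$, I would invoke the Rubio de Francia iteration algorithm on $L^{r'(\cdot)}$, which is available precisely because $r'(\cdot) \in \mathcal{B}(\Rn)$. Setting
$$\RR h(x) = \sum_{k=0}^{\infty} \frac{M^k h(x)}{\bigl(2\|M\|_{L^{r'(\cdot)} \to L^{r'(\cdot)}}\bigr)^k},$$
one obtains $h \leq \RR h$ pointwise, $\|\RR h\|_{r'(\cdot)} \leq 2\|h\|_{r'(\cdot)} \leq 2$, and $\RR h \in A_1$ with $[\RR h]_{A_1} \leq 2\|M\|_{L^{r'(\cdot)} \to L^{r'(\cdot)}}$. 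Taking $w = \RR h$ in the weighted off-diagonal hypothesis yields
$$\int_{\Rn} |f|^{q_0} h\,dx \leq \int_{\Rn} |f|^{q_0}\,\RR h\,dx \leq C_0^{q_0}\left(\int_{\Rn} g^{p_0} (\RR h)^{p_0/q_0}\,dx\right)^{q_0/p_0}.$$

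The final step is a single application of Hölder's inequality in variable Lebesgue spaces to the right-hand integral, together with the exponent identity that makes the scales match. Applying Hölder with the pair $\bigl(p(\cdot)/p_0, (p(\cdot)/p_0)'\bigr)$ gives
$$\int_{\Rn} g^{p_0}(\RR h)^{p_0/q_0}\,dx \leq \|g\|_{p(\cdot)}^{p_0}\, \bigl\|(\RR h)^{p_0/q_0}\bigr\|_{(p(\cdot)/p_0)'}.$$
The crucial arithmetic to verify is that the second factor is precisely $\|\RR h\|_{r'(\cdot)}^{p_0/q_0}$, i.e.\ that
$$\frac{q_0}{p_0}\left(\frac{q(\cdot)}{q_0}\right)' = \left(\frac{p(\cdot)}{p_0}\right)',$$
which is a direct consequence of the defining relation $1/p(x) - 1/q(x) = 1/p_0 - 1/q_0$. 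Combined with $\|\RR h\|_{r'(\cdot)} \leq 2$, this produces $\int |f|^{q_0} h\,dx \leq C\|g\|_{p(\cdot)}^{q_0}$, and taking the supremum over $h$ and a $q_0$-th root finishes the proof.

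The main obstacle is not any single computation but rather bookkeeping: one has to check that all the variable exponents introduced (namely $q(\cdot)$, $q(\cdot)/q_0$, and its conjugate) lie in the classes where duality and boundedness of $M$ are valid, so that each application of Hölder's inequality and of the Rubio de Francia iteration is licit. The constraints $p_0 < p_-$ and $p_+ < p_0 q_0/(q_0 - p_0)$ are used exactly to ensure $1 \leq q(\cdot)/q_0 < \infty$ essentially, and the hypothesis $(q(\cdot)/q_0)' \in \mathcal{B}(\Rn)$ is what powers the whole argument. Aside from this, the justification of interchanging the truncation, dualization and integration is the only technical nuisance, and it is handled by the standard truncation--monotone convergence argument used in the proof of Theorem \ref{thm1.1}.
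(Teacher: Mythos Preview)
Your argument is correct and follows essentially the same route as the paper. The paper does not give a separate proof of this cited theorem, but its proof of Theorem~\ref{thm1.4} specialized to $\delta=1$ is line-for-line your argument: dualize $\|f\|_{q(\cdot)}^{q_0}=\||f|^{q_0}\|_{q(\cdot)/q_0}$, run the Rubio de Francia iteration on $L^{(q(\cdot)/q_0)'}$ to obtain an $A_1$ majorant $\mathcal{R}h$, apply the weighted hypothesis with $w=\mathcal{R}h$, and close with H\"older together with the identity $\tfrac{q_0}{p_0}\bigl(q(\cdot)/q_0\bigr)'=\bigl(p(\cdot)/p_0\bigr)'$, which the paper records as $\|(\mathcal{R}h)^{p_0/q_0}\|_{X'}^{q_0/p_0}=\|\mathcal{R}h\|_{Y'}$.
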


By Johnson and Neugebauer \cite{JN} and by Duoandikoetxea et al.
\cite{DMOS} have been obtained a restricted range extrapolation
theorem by restricting the class of weights. We will prove analogous
kind theorem for variable exponent Lebesgue spaces.

We prove following theorems.

\begin{thm} \label{thm1.3}
Given a family $\mathcal{F},$ suppose that for some
  $p_0,\,\delta, $  $0< p_0 <\infty,\,0 < \delta <
1,$ and for every weight  $w\in A_{1}$
$$
\int_{\Rn} f(x)^{p_0} w^{\delta}(x)dx\le C_0 \int_{\Rn} g(x)^{p_0}
w^{\delta}(x)dx,\,\,\,\,\,(f,g)\in\mathcal{F}.
$$
Let  $\delta(p(\cdot)/p_{0})'\in \mathcal{B}(\mathbb{R}^{n})$,
$p_0<p^{-}\le p^+<\frac{p_{0}}{1-\delta}.$ Then for all
$(f,g)\in\mathcal{F}$ such that $f\in L^{p(\cdot)}(\mathbb{R}^{n})$
$$\|f\|_{p(\cdot)}\le C\|g\|_{p(\cdot)}.$$
\end{thm}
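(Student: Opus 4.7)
The plan is to adapt the Rubio de Francia iteration scheme used in the proof of Theorem \ref{thm1.1}, building an $A_1$ weight whose $\delta$-th power provides the test function in a duality argument against $L^{p(\cdot)/p_0}(\Rn)$.

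Write $q(\cdot):=(p(\cdot)/p_0)'$, so that $q(x)=p(x)/(p(x)-p_0)$. The assumption $p_0<p^-$ makes $q$ finite, and the hypothesis $p^+<p_0/(1-\delta)$ is equivalent to $\delta q(x)>1$ for every $x$, so $L^{\delta q(\cdot)}(\Rn)$ is a genuine Banach function space on which, by assumption, $M$ is bounded. Using the identity $\|f\|_{p(\cdot)}^{p_0}=\|f^{p_0}\|_{p(\cdot)/p_0}$ together with associate-space duality for the Banach function space $L^{p(\cdot)/p_0}(\Rn)$, I would pick a non-negative $h$ with $\|h\|_{q(\cdot)}\le 1$ such that
$$\|f\|_{p(\cdot)}^{p_0}\le C\int_{\Rn}f(x)^{p_0}h(x)\,dx.$$

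Next I apply the Rubio de Francia iteration
$$\mathcal R H(x):=\sum_{k=0}^{\infty}\frac{M^kH(x)}{\bigl(2\|M\|_{L^{\delta q(\cdot)}}\bigr)^k}$$
to the seed $H:=h^{1/\delta}$. Routine checks give: (i) $H\le\mathcal RH$, so $h\le(\mathcal RH)^{\delta}$ pointwise; (ii) the norm bound $\|\mathcal RH\|_{\delta q(\cdot)}\le 2\|H\|_{\delta q(\cdot)}=2\|h\|_{q(\cdot)}^{1/\delta}\le 2$; (iii) $\mathcal RH\in A_1$ with constant controlled by $\|M\|_{L^{\delta q(\cdot)}}$. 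Inserting $w=\mathcal RH$ into the standing hypothesis and using (i) yields
$$\int_{\Rn}f^{p_0}h\,dx\le\int_{\Rn}f^{p_0}(\mathcal RH)^{\delta}\,dx\le C_0\int_{\Rn}g^{p_0}(\mathcal RH)^{\delta}\,dx.$$
H\"older's inequality for the pair $L^{p(\cdot)/p_0}$, $L^{q(\cdot)}$, together with the scaling identity $\|(\mathcal RH)^{\delta}\|_{q(\cdot)}=\|\mathcal RH\|_{\delta q(\cdot)}^{\delta}$ and (ii), bounds the right-hand side by $C\|g\|_{p(\cdot)}^{p_0}$. Taking the supremum over admissible $h$ and extracting the $p_0$-th root completes the argument.

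The main obstacle, and essentially the only non-routine point, is keeping the arithmetic of exponents transparent so that $\delta q(\cdot)>1$ is seen to emerge precisely from $p^+<p_0/(1-\delta)$, and checking that the seed $h^{1/\delta}$ together with the output $(\mathcal RH)^{\delta}$ scale correctly when moving between $L^{q(\cdot)}$ and $L^{\delta q(\cdot)}$; this is the place where the restricted character of the weight class (powers of $A_1$) couples with the exponent restriction. A standard technical caveat, handled by truncation, is that the given hypothesis is only meaningful when $\int f^{p_0}w^{\delta}\,dx<\infty$, so one should first reduce to pairs $(f,g)$ for which such finiteness is automatic before running the iteration.
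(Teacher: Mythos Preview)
Your argument is correct and follows essentially the same Rubio de Francia scheme as the paper's proof of Theorem~\ref{thm1.4} (specialized to $p_0=q_0$): build an $A_1$ weight via iteration on the space $L^{\delta(p(\cdot)/p_0)'}$, use it as $w$ in the hypothesis, and close with duality/H\"older. The only cosmetic difference is that the paper iterates the sublinear operator $\mathcal{M}h:=\bigl(M(h^{1/\delta})\bigr)^{\delta}$ applied to $h$ and then shows $(\mathcal{R}h)^{1/\delta}\in A_1$, whereas you iterate $M$ itself on the seed $h^{1/\delta}$; the two packagings are equivalent, and the paper verifies the finiteness of $\int f^{p_0}(\mathcal{R}h)\,dx$ directly via H\"older (using $f\in L^{p(\cdot)}$) rather than by truncation.
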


\begin{thm} \label{thm1.4}
 Given a family $\mathcal{F}$, assume that for some $p_{0},$  $q_{0}$
 and $\delta$
 , $0<p_{0}\leq q_{0}<\infty,$  $0<\delta<1$  and every weight $w\in A_{1},$
 \begin{equation} \label{1.4}
 \left(\int_{\mathbb{R}^{n}}f(x)^{q_{0}}w(x)^{\delta}dx\right)^{1/q_{0}}\leq
 C_0\left(\int_{\mathbb{R}^{n}}g(x)^{p_{0}}w(x)^{\delta p_{0}/q_{0}}dx\right)^{1/p_{0}},\,\,\,\,\,(f,g)\in\mathcal{F}.
 \end{equation}
Given $p(\cdot)\in \mathcal{P}^{0}$ such that $p_{0}<p^{-}\leq
p_{+}<\frac{p_{0}q_{0}}{q_{0}-\delta p_{0}},$ define the function
$q(\cdot)$ by
\begin{equation}\label{1.1}
\frac{1}{p(x)}-\frac{1}{q(x)}=\frac{1}{p_{0}}-\frac{1}{q_{0}},\,\,\,x\in\mathbb{R}^{n}.
\end{equation}
If $\delta(q(\cdot)/q_{0})'\in \mathcal{B}(\mathbb{R}^{n}),$ then
for all $(f,g)\in\mathcal{F}$ such that $f\in
L^{p(\cdot)}(\mathbb{R}^{n}),$
$$
\|f\|_{q(\cdot)}\leq C\|g\|_{p(\cdot)}.
$$
\end{thm}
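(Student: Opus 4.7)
The strategy is to mimic the proof of Theorem~\ref{thm1.3} using duality and a Rubio de Francia iteration, with an added H\"older step to convert from a diagonal to an off-diagonal estimate. Since the relation \eqref{1.1} combined with $p^{-}>p_{0}$ yields $q^{-}>q_{0}$, the identity $\|f\|_{q(\cdot)}^{q_{0}}=\||f|^{q_{0}}\|_{q(\cdot)/q_{0}}$ and the duality of variable Lebesgue spaces reduce the task to showing
$$\int_{\Rn}f(x)^{q_{0}}h(x)\,dx\ls \|g\|_{p(\cdot)}^{q_{0}}$$
uniformly for all non-negative $h$ with $\|h\|_{(q(\cdot)/q_{0})'}\le 1$. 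Set $s(\cdot):=\delta(q(\cdot)/q_{0})'$; a direct computation shows that the numerical constraint $p^{+}<p_{0}q_{0}/(q_{0}-\delta p_{0})$ is equivalent to $q^{+}<q_{0}/(1-\delta)$, which is exactly what forces $s^{-}>1$, so that the hypothesis $s(\cdot)\in \mathcal{B}(\Rn)$ is meaningful and $\|\cdot\|_{s(\cdot)}$ is a genuine norm.

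Given such an $h$, apply the Rubio de Francia iteration in $L^{s(\cdot)}(\Rn)$ to $h^{1/\delta}$, namely
$$W:=\sum_{k=0}^{\infty}\frac{M^{k}(h^{1/\delta})}{\bigl(2\|M\|_{L^{s(\cdot)}\to L^{s(\cdot)}}\bigr)^{k}}.$$
The three standard properties hold: $W\ge h^{1/\delta}$, $\|W\|_{s(\cdot)}\le 2\|h^{1/\delta}\|_{s(\cdot)}=2\|h\|_{(q(\cdot)/q_{0})'}^{1/\delta}$ (the identity follows directly from the definition of the variable norm upon substituting $\mu=\lambda^{\delta}$), and $W$ satisfies $MW\le 2\|M\|_{L^{s(\cdot)}}W$, so $W\in A_{1}$ with constant independent of $h$. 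Because $W^{\delta}\ge h$, hypothesis \eqref{1.4} applied with weight $W$ gives
$$\int_{\Rn}f^{q_{0}}h\,dx\le \int_{\Rn}f^{q_{0}}W^{\delta}\,dx\le C_{0}^{q_{0}}\left(\int_{\Rn}g^{p_{0}}W^{\delta p_{0}/q_{0}}\,dx\right)^{q_{0}/p_{0}}.$$

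To bound the right-hand side, apply H\"older's inequality in the variable Lebesgue space $L^{p(\cdot)/p_{0}}$, whose associate exponent is $(p(\cdot)/p_{0})'$:
$$\int_{\Rn}g^{p_{0}}W^{\delta p_{0}/q_{0}}\,dx\ls \|g\|_{p(\cdot)}^{p_{0}}\,\|W\|_{(\delta p_{0}/q_{0})(p(\cdot)/p_{0})'}^{\delta p_{0}/q_{0}}.$$
The crucial observation, obtained by an elementary manipulation of \eqref{1.1}, is the pointwise identity
$$\frac{\delta p_{0}}{q_{0}}\Bigl(\frac{p(x)}{p_{0}}\Bigr)' \;=\; \delta\Bigl(\frac{q(x)}{q_{0}}\Bigr)' \;=\; s(x),$$
which reduces the previous estimate to $\ls \|g\|_{p(\cdot)}^{p_{0}}\|h\|_{(q(\cdot)/q_{0})'}^{p_{0}/q_{0}}$. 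Plugging back and raising to the $q_{0}/p_{0}$ power yields $\int f^{q_{0}}h\ls \|g\|_{p(\cdot)}^{q_{0}}\|h\|_{(q(\cdot)/q_{0})'}$, and the supremum over admissible $h$ finishes the proof. The main technical points are the verification of the exponent identity above, which is where the specific form of \eqref{1.1} is essential, together with checking that the stated range of $p^{+}$ translates into $s^{-}>1$. A minor subtlety — the a priori finiteness of $\||f|^{q_{0}}\|_{q(\cdot)/q_{0}}$ needed to justify the duality reduction — is handled by the standard truncation $f\mapsto f\chi_{\{|x|\le R,\,f\le N\}}$ exactly as in Theorems~\ref{thm1.1} and \ref{thm1.3}, with all constants independent of $R$ and $N$.
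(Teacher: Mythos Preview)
Your proof is correct and follows essentially the same route as the paper: duality in $L^{q(\cdot)/q_{0}}$, a Rubio de Francia iteration to manufacture an $A_{1}$ weight, application of the hypothesis \eqref{1.4}, and a variable-exponent H\"older inequality together with the pointwise identity $(p_{0}/q_{0})(p(\cdot)/p_{0})'=(q(\cdot)/q_{0})'$. The only cosmetic difference is that the paper packages the iteration by defining $\mathcal{M}h=(M(h^{1/\delta}))^{\delta}$ and iterating $\mathcal{M}$ on $h$ in $L^{(q(\cdot)/q_{0})'}$ (so that $(\mathcal{R}h)^{1/\delta}\in A_{1}$), whereas you iterate $M$ directly on $h^{1/\delta}$ in $L^{s(\cdot)}$ to obtain $W\in A_{1}$; since $\mathcal{M}^{k}h=(M^{k}(h^{1/\delta}))^{\delta}$, the two constructions are interchangeable.
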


Note that Theorem \ref{thm1.3} is a particular case of Theorem \ref{thm1.4} with
$p_{0}=q_{0}.$

The following theorem in case $p_{0}=2$ proved by Duoandikoetxea,
\emph{et. al}. in \cite{DMOS}. The case $p_{0}\neq2$  see
\cite{CUMP}.
\begin{thm}\label{thm1.5}
Given $\delta,\,\,0<\delta<1,$  suppose that for all $w\in
A_{p_{0}},\,\,1<p_{0}<\infty$
$$
\int_{\mathbb{R}^{n}}f(x)^{p_{0}}w^{\delta}(x)dx\leq
C\int_{\mathbb{R}^{n}}g(x)^{p_{0}}w^{\delta}(x)dx,\,\,\,\,(f,g)\in
\mathcal{F}.
$$
Then for all
$p,\,\,\frac{p_{0}}{1+\delta(p_{0}-1)}<p<\frac{p_{0}}{1-\delta},$
and every $w^{\frac{p_{0}}{p_{0}-p(1-\delta)}}\in
A_{\frac{p_{0}p\delta}{p_{0}-p(1-\delta)}}$
$$
\int_{\mathbb{R}^{n}}f(x)^{p}w(x)dx\leq
C\int_{\mathbb{R}^{n}}g(x)^{p}w(x)dx.
$$
\end{thm}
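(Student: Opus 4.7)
The result is a restricted-range extrapolation theorem in the spirit of Johnson--Neugebauer and Duoandikoetxea--Martell--Ombrosi--Sbordone, and my plan is to adapt the classical Rubio de Francia algorithm, replacing the ordinary maximal operator by one adapted to the weight~$w$, so that the iteration produces weights of exactly the form $W^{\delta}$ with $W\in A_{p_0}$ needed to feed into the hypothesis. The particular values $s=p_0/(p_0-p(1-\delta))$ and $t=p_0p\delta/(p_0-p(1-\delta))$ appearing in the conclusion are precisely calibrated so that this matching works.

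I would handle the range $p_0\le p<p_0/(1-\delta)$ first; the complementary range $p_0/(1+\delta(p_0-1))<p<p_0$ follows from it by a duality argument that exploits the symmetry $w\in A_r\Leftrightarrow w^{1-r'}\in A_{r'}$ together with the analogous symmetry in the weight condition on $w^{s}$. So assume $p>p_{0}$ and set $r=p/p_{0}>1$. By $L^{r}(w)$-duality,
$$
\|f\|_{L^{p}(w)}^{p_{0}}=\|f^{p_{0}}\|_{L^{r}(w)}=\sup\int_{\Rn}f^{p_{0}}\,h\,w\,dx,
$$
where the supremum runs over $h\ge 0$ with $\|h\|_{L^{r'}(w)}\le 1$, and by density we may assume such $h$ are bounded with compact support so that all forthcoming quantities are finite.

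For each such test function $h$ I would build a Rubio de Francia iteration operator $\mathcal{R}$ as a geometric series in the iterates of a maximal operator adapted to $w$, with three design goals: (a) $\mathcal{R}h\ge h$ pointwise; (b) $\|\mathcal{R}h\|_{L^{r'}(w)}\le 2$; and, crucially, (c) the weight $W:=((\mathcal{R}h)\,w)^{1/\delta}$ lies in $A_{p_{0}}$ with a constant depending only on the $A_{t}$-constant of $w^{s}$. Once (a)--(c) are in place, the hypothesis applied with this $W$ gives
$$
\int_{\Rn}f^{p_{0}}(\mathcal{R}h)\,w\,dx=\int_{\Rn}f^{p_{0}}W^{\delta}\,dx\le C\int_{\Rn}g^{p_{0}}W^{\delta}\,dx=C\int_{\Rn}g^{p_{0}}(\mathcal{R}h)\,w\,dx,
$$
and H\"older bounds the right-hand side by $C\|g\|_{L^{p}(w)}^{p_{0}}\|\mathcal{R}h\|_{L^{r'}(w)}\le 2C\|g\|_{L^{p}(w)}^{p_{0}}$; taking the supremum over $h$ yields the claim.

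The main obstacle is property (c). One needs simultaneously the iteration bounded on $L^{r'}(w)$ and the composite weight $((\mathcal{R}h)\,w)^{1/\delta}$ in $A_{p_{0}}$, and this is where the exponents $s$ and $t$ do their work. Using the Jones factorization $A_{p_{0}}=A_{1}\cdot A_{1}^{\,1-p_{0}}$, I would rewrite the target $A_{p_{0}}$ condition as a pairing of two $A_{1}$ conditions; the hypothesis $w^{s}\in A_{t}$, together with the $A_{1}$-like estimate built into the Rubio de Francia construction, fits these $A_{1}$ slots after elementary exponent arithmetic (which is where the precise values of $s,t$ come in). The boundedness of the weighted maximal operator on $L^{r'}(w)$ that is needed to run the iteration reduces, by a standard Muckenhoupt criterion, to the very same $A_{t}$ condition on $w^{s}$. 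Verifying that the numerology works out over the entire range $p_{0}/(1+\delta(p_{0}-1))<p<p_{0}/(1-\delta)$, and not just at $p=p_{0}$, is the technical heart of the proof.
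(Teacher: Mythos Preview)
The paper does not prove Theorem~\ref{thm1.5}; it simply cites \cite{DMOS} for the case $p_{0}=2$ and \cite{CUMP} for general $p_{0}$. So there is no ``paper's own proof'' to compare with, only the referenced arguments.

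Your outline is essentially the strategy followed in those references: dualize to a test function $h$, run a Rubio de Francia iteration with a suitably modified maximal operator so that $\mathcal{R}h$ is simultaneously controlled in $L^{r'}(w)$ and produces a weight $W=((\mathcal{R}h)\,w)^{1/\delta}\in A_{p_{0}}$, apply the hypothesis, and finish with H\"older. The split into the ranges $p\ge p_{0}$ and $p<p_{0}$, with the second handled by duality, is also the standard organization. One remark: in the actual proofs the operator used in the iteration is chosen explicitly---for $p>p_{0}$ one takes something like $\mathcal{M}h=w^{-1}\bigl(M((hw)^{1/\delta})\bigr)^{\delta}$, and the verification that $\mathcal{M}$ is bounded on $L^{r'}(w)$ reduces directly to the Muckenhoupt condition $w^{s}\in A_{t}$ with the stated exponents $s,t$; the $A_{p_{0}}$ membership of $W$ then follows from the $A_{1}$-type bound $\mathcal{M}(\mathcal{R}h)\le C\,\mathcal{R}h$ alone, without invoking the full Jones factorization. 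So your description of step~(c) is slightly more elaborate than needed, but the overall plan is correct and matches the literature.
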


Using the fact that $A_{1}\subset A_{p}$ from Theorem \ref{thm1.3} and
Theorem \ref{thm1.5}, we obtain

\begin{corollary} \label{cor1.6}
Given $\delta,\,\,0<\delta<1,$  suppose that for all $w\in
A_{p_{0}},\,\,1<p_{0}<\infty$
$$
\int_{\mathbb{R}^{n}}f(x)^{p_{0}}w^{\delta}(x)dx\leq
C_0\int_{\mathbb{R}^{n}}g(x)^{p_{0}}w^{\delta}(x)dx,\,\,\,\,(f,g)\in
\mathcal{F}.
$$
Let $\delta_{\ast}(p(\cdot)/p_{\ast})'\in\mathbb{B}(\mathbb{R}^{n})$
where $\delta_{\ast}=\frac{p_{0}-p_{\ast}(1-\delta)}{p_{0}}$ and
$$
\frac{p_{0}}{1+\delta(p_{0}-1)}<p_{\ast}<p^{-}\leq
p^{+}<\frac{p_{0}}{1-\delta}.
$$
Then
for all $(f,g)\in\mathcal{F}$ such that $f\in
L^{p(\cdot)}(\mathbb{R}^{n})$
$$
\|f\|_{p(\cdot)}\leq C\|g\|_{p(\cdot)}.
$$

\end{corollary}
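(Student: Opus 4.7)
The plan is to obtain Corollary \ref{cor1.6} by chaining Theorem \ref{thm1.5} and Theorem \ref{thm1.3}. The starting hypothesis is of $A_{p_0}$-type, which is too strong a weight class to feed directly into Theorem \ref{thm1.3}; however, Theorem \ref{thm1.5} converts an $A_{p_0}$-weighted inequality into a whole range of weighted inequalities at neighbouring exponents. The strategy is to use Theorem \ref{thm1.5} to produce an inequality whose weights can then be restricted to powers of $A_1$ weights, so that the resulting statement is exactly the $A_1$-based hypothesis of Theorem \ref{thm1.3} with the pair $(p_0,\delta)$ replaced by a new pair $(p_*,\delta_*)$.

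First I would apply Theorem \ref{thm1.5} to $\mathcal{F}$. For every $p$ in $(p_0/(1+\delta(p_0-1)),\,p_0/(1-\delta))$ and every $w$ with $w^{p_0/(p_0-p(1-\delta))}\in A_{p_0p\delta/(p_0-p(1-\delta))}$, one obtains $\int f^{p}w\,dx\le C\int g^{p}w\,dx$. Next I specialize to $p=p_*$, which is admissible by the hypothesis on $p_*$, and change variables by $w=u^{\delta_*}$, equivalently $u=w^{p_0/(p_0-p_*(1-\delta))}$. The exponent of the $A$-class becomes $q=p_0p_*\delta/(p_0-p_*(1-\delta))$, and the lower bound $p_*>p_0/(1+\delta(p_0-1))$ is equivalent to $q\ge 1$. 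Since $A_1\subset A_q$, restricting to $u\in A_1$ yields
$$\int_{\mathbb{R}^{n}}f(x)^{p_*}u(x)^{\delta_*}\,dx\le C\int_{\mathbb{R}^{n}}g(x)^{p_*}u(x)^{\delta_*}\,dx,\qquad u\in A_1.$$
This is precisely the hypothesis of Theorem \ref{thm1.3} with $(p_0,\delta)$ replaced by $(p_*,\delta_*)$. The condition $\delta_*(p(\cdot)/p_*)'\in\mathcal{B}(\mathbb{R}^{n})$ is part of the assumption, and the required range $p_*<p^-\le p^+<p_*/(1-\delta_*)$ holds because $1-\delta_*=p_*(1-\delta)/p_0$ gives $p_*/(1-\delta_*)=p_0/(1-\delta)$, matching the stated upper bound on $p^+$. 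Theorem \ref{thm1.3} then delivers $\|f\|_{p(\cdot)}\le C\|g\|_{p(\cdot)}$.

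The main obstacle is essentially algebraic bookkeeping rather than any analytic difficulty: one must verify that $\delta_*\in(0,1)$ under the given range of $p_*$, that $q\ge 1$ so that $A_1\subset A_q$, and that the endpoint identity $p_*/(1-\delta_*)=p_0/(1-\delta)$ holds. These identities explain precisely why $\delta_*$ is defined to be $(p_0-p_*(1-\delta))/p_0$; once they are in place, the remainder of the argument is a direct concatenation of the two theorems.
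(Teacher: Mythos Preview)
Your proposal is correct and follows exactly the route indicated in the paper, which derives the corollary by combining Theorem~\ref{thm1.5} with Theorem~\ref{thm1.3} via the inclusion $A_1\subset A_q$. You have supplied precisely the algebraic verifications (that $\delta_*\in(0,1)$, that $q>1$, and that $p_*/(1-\delta_*)=p_0/(1-\delta)$) which the paper leaves implicit.
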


\begin{remark} We can restate the hypotheses of Corollary \ref{cor1.6} in term $w^{\delta}\in A_q\cap RH_{\frac{1}{\delta}}$, where $q =\frac{p-1+\delta}{\delta}$.
 and $RH_s$ is  class of weights satisfying  the reverse H\"older inequality: given $s$, $1 < s < \infty$, we say that
$w\in RH_{s}$ if for every cube $Q\subset \Rn$
$$ \left(\frac{1}{|Q|}\int_Q w(x)^s dx\right)^{\frac{1}{s}}\le \frac{C}{|Q|}\int_Q w(x) dx.
$$ 
To do so, we need to use the following equivalence: for all
$\delta$, $p$, $0<\delta <1$, $1 < p < \infty$,  $w\in A_p$ if and only if $w^{\delta}\in A_q\cap RH_{\frac{1}{\delta}}$, this was proved by Johnson and Neugebauer \cite{JN}.
 
\end{remark}

\section{Proof of Theorem \ref{thm1.4}}

First note that from \eqref{1.1} obtain that
$$
\frac{1}{p^{-}}-\frac{1}{q^{-}}=\frac{1}{p^{+}}-\frac{1}{q^{+}}=\frac{1}{p_{0}}-\frac{1}{q_{0}}.
$$
From $p_{0}<p^{-}\leq p^{+}<\frac{p_{0}q_{0}}{q_{0}-\delta p_{0}},$
we obtain  $q_{0}<q^{-}\leq q^{+}<\frac{q_{0}}{1-\delta}.$

Let $X=L^{p(\cdot)/p_{0}}(\mathbb{R}^{n})$ and
$Y=L^{q(\cdot)/q_{0}}(\mathbb{R}^{n}).$

 Let $q^{-}<q^{+},$ (for the case  $q^{-}=q^{+}$ see \cite{CUMP}) then we can rewrite the estimate
$q^{+}<\frac{q_{0}}{1-\delta}$ in the form
$1<\frac{\delta q^{+}}{q^{+}-q_{0}}.$

We have
$$
1<\frac{\delta q^{+}}{q^{+}-q_{0}}\leq
\frac{\delta q(x)}{q(x)-q_{0}}=\delta\left(\frac{q(x)}{q_{0}}\right)',\,\,\,x\in\mathbb{R}^{n}.
$$
Define the following operator
$\mathcal{M}h(x)=M(h^{1/\delta})^{\delta},$ where $M$ is
Hardy-Littlewood maximal operator.  Note that operator $\mathcal{M}$
is bounded on $Y'.$  Indeed
$$
\|\mathcal{M}\|_{Y'}=\|(M(h^{1/\delta}))^{\delta}\|_{Y'}
=\|M(h^{1/\delta})\|_{\delta(q(\cdot)/q_{0})'}^{\delta}
$$
$$
\leq C\|h^{1/\delta}\|_{\delta(q(\cdot)/q_{0})'}^{\delta} \leq
C\|h\|_{Y'}.
$$

Since the  operator $\mathcal{M}$ is bounded on $Y',$ we can define
the Rubio de Francia iteration algorithm:
$$
\mathcal{R}h(x)=\sum_{k=0}^{\infty}\frac{\mathcal{M}^{k}h(x)}{2^{k}\|\mathcal{M}\|_{Y'}^{k}},
$$
where, for $k\geq1,$ $\mathcal{M}^{k}=\mathcal{M}\circ
\mathcal{M}\cdot\cdot\cdot\circ \mathcal{M}$ denotes $k$ iterations
of the operator $\mathcal{M}$, and  $\mathcal{M}^{0}h=|h|$.

It is follows immediately from the definition that $h(x)\leq\mathcal{R}h(x)$
and $\|\mathcal{R}\|_{Y'}\leq 2\|h\|_{Y'}.$

By the sublinearity   of $\mathcal{M}$
$$
(M(\mathcal{R}h)^{1/\delta}))^{\delta}=\mathcal{M}(\mathcal{R}h)\leq\sum_{j=0}^{\infty}\frac{\mathcal{M}^{j+1}}{2^{k}\|\mathcal{M}^{j}\|_{X'}}\leq
C\cdot\mathcal{R}(h).
$$
Therefore, $(\mathcal{R}h)^{1/\delta}\in A_{1}.$

We can now prove the desired inequality. Fix $(f,g)\in\mathcal{F}.$
Since $Y$ is a Banach function space
$$
\|f\|_{q(\cdot)}^{q_{0}}=\|f^{q_{0}}\|_{Y}=\sup\left\{\int_{\mathbb{R}^{n}}|f(x)|^{q_{0}}h(x)dx:\,\,\,h\in
Y',\,\,\|h\|_{Y'}\leq1\right\}.
$$
Since $f$ is non-negative, we may also restrict the supremum to
non-negative $h.$ Therefore, it will suffice to fix a function $h$
and show that
$$
\int_{\mathbb{R}^{n}}f(x)^{q_{0}}h(x)dx\leq
C\|g\|_{p(\cdot)}^{q_{0}}
$$
with a constant independent of $h.$

We have
\begin{align*}
\int_{\mathbb{R}^{n}}f(x)^{q_{0}}h(x)dx&\leq\int_{\mathbb{R}^{n}}f(x)^{p_{0}}\mathcal{R}h(x)dx\\
&\leq\|f^{q_{0}}\|_{Y}\cdot\|\mathcal{R}h\|_{Y'}\\
&\leq
C\|f\|_{q(\cdot)}^{q_{0}}\|h\|_{Y'}<\infty,
\end{align*}
where we have used that $h \le\mathcal{R}h$. Since  $(\mathcal{R}h)^{1/\delta}\in A_{1}$, by using our hypothesis \eqref{1.4} we obtain

\begin{align*}
\int_{\mathbb{R}^{n}}f(x)^{q_{0}}h(x)dx&\leq\int_{\mathbb{R}^{n}}f(x)^{q_{0}}\mathcal{R}h(x)dx\\
&\le 
C\left(\int_{\mathbb{R}^{n}}g(x)^{p_{0}}\mathcal{R}h(x)^{p_{0}/q_{0}}dx\right)^{q_{0}/p_{0}}\\
&\leq
C\|g^{p_{0}}\|_{X}^{q_{0}/p_{0}}\|(\mathcal{R}h)^{p_{0}/q_{0}}\|_{X'}^{{q_{0}/p_{0}}}\\
&=\|g\|_{p(\cdot)}^{q_{0}}\|(\mathcal{R}h)^{q_{0}/p_{0}}\|_{X'}^{{q_{0}/p_{0}}}.
\end{align*}

Note that
$$
\|(\mathcal{R}h)^{q_{0}/p_{0}}\|_{X'}^{{q_{0}/p_{0}}}=\|\mathcal{R}h)\|_{Y'}\leq
C\|h\|_{Y'}=C.
$$
$\Box$

\section{Applications}

In this section we give a number of applications of Theorems~\ref{thm1.3} and \ref{thm1.4} and 
 Corollary \ref{cor1.6}, to show that a wide variety of classical operators
are bounded on the variable $L^{p(\cdot)}$ spaces.
 
 We start to  introduce the most important condition on the exponent
in the study of variable exponent spaces, the log-H\"older continuity condition.

We say that a function $p: \Rn \to(0,\infty)$ is locally
log-H\"older continuous on $\Rn$ if there exists $c_1 > 0$ such that
$$|p(x)-p(y)|\le c_1 \frac{1}{\log(e + 1/|x- y|)}$$
for all $x, y\in \Rn$.  We say that $p(\cdot)$ satisfies the
log-H\"older decay condition if there exist $p_\infty\in (0,\infty)$
and a constant $c_2 > 0$ such that
$$|p(x)-p_\infty|\le c_2 \frac{1}{\log(e + |x|)}$$
for all $x\in \Rn$. We say that $p(\cdot)$ is globally log-H\"older
continuous in $\Rn$  if it is locally log-H\"older continuous and
satisfies the log-H\"older decay condition.

If $p: \mathbb{R}^{n}\to (1,\infty)$ is globally log-H\"older
continuous function in $\Rn$ and $p^->1$, then the classical
boundedness theorem for the Hardy-Littlewood maximal operator can be
extended to $L^{p(\cdot)}(\mathbb{R}^{n})$ (see \cite{CUF, DHHR}).
This class of exponent we denote  by  $\P_{\log}.$ For the class of
exponents $\P_{\log}$ we have

\begin{corollary} \label{cor3.1}
Given $\delta,\,\,0<\delta<1,$  suppose that for all $w_{0}\in
A_{p_{0}},\,\,1<p_{0}<\infty$
$$
\int_{\mathbb{R}^{n}}f(x)^{p_{0}}w_{0}^{\delta}(x)dx\leq
C_0\int_{\mathbb{R}^{n}}g(x)^{p_{0}}w_{0}^{\delta}(x)dx,\,\,\,\,(f,g)\in
\mathcal{F}.
$$
Let $p(\cdot)\in \P_{\log}$ and
$$
\frac{p_{0}}{1+\delta(p_{0}-1)}<p^{-}\leq
p^{+}<\frac{p_{0}}{1-\delta}.
$$
Then for all $(f,g)\in\mathcal{F}$ such that $f\in
L^{p(\cdot)}(\mathbb{R}^{n})$
$$
\|f\|_{p(\cdot)}\leq C\|g\|_{p(\cdot)}.
$$
\end{corollary}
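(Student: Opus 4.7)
The plan is to deduce Corollary \ref{cor3.1} directly from Corollary \ref{cor1.6} by exhibiting a suitable auxiliary exponent $p_{\ast}$. The hypothesis $\frac{p_{0}}{1+\delta(p_{0}-1)}<p^{-}$ allows me to pick $p_{\ast}$ with $\frac{p_{0}}{1+\delta(p_{0}-1)}<p_{\ast}<p^{-}$. With this choice, the quantity $\delta_{\ast}=\frac{p_{0}-p_{\ast}(1-\delta)}{p_{0}}=1-\frac{p_{\ast}(1-\delta)}{p_{0}}$ lies in $(0,1)$ (the chain $p_{\ast}<p^{-}\leq p^{+}<\frac{p_{0}}{1-\delta}$ forces $p_{\ast}<\frac{p_{0}}{1-\delta}$), so the only remaining task is to verify that $r(\cdot):=\delta_{\ast}(p(\cdot)/p_{\ast})'$ belongs to $\mathcal{B}(\Rn)$.

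To do this I would combine two facts. First, since $p(\cdot)\in\P_{\log}$ and $p_{\ast}<p^{-}$, the exponent $p(\cdot)/p_{\ast}$ is globally log-H\"older continuous with values in a compact subinterval of $(1,\infty)$; hence $(p(\cdot)/p_{\ast})'$ is also globally log-H\"older continuous, and multiplication by the positive constant $\delta_{\ast}$ gives $r(\cdot)\in\P_{\log}$. Second, writing
$$
r(x)=\frac{p_{0}-p_{\ast}(1-\delta)}{p_{0}}\cdot\frac{p(x)}{p(x)-p_{\ast}},
$$
I observe that the second factor is decreasing in $p(x)$, so $r^{-}$ is attained at $p(x)=p^{+}$, and a short algebraic rearrangement shows that $r^{-}>1$ is equivalent to $p^{+}<\frac{p_{0}}{1-\delta}$, precisely the standing hypothesis. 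Together these two facts place $r(\cdot)$ in $\mathcal{B}(\Rn)$ via the standard log-H\"older boundedness theorem for the Hardy-Littlewood maximal operator.

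With $p_{\ast}$ chosen so that $\delta_{\ast}(p(\cdot)/p_{\ast})'\in\mathcal{B}(\Rn)$, all the hypotheses of Corollary \ref{cor1.6} are satisfied and the desired inequality $\|f\|_{p(\cdot)}\leq C\|g\|_{p(\cdot)}$ follows immediately. The only nontrivial step is the algebraic bookkeeping that identifies the essential infimum condition $r^{-}>1$ with the assumed ceiling $p^{+}<\frac{p_{0}}{1-\delta}$; the fact that this identification is an equivalence (rather than just an implication) is what makes the corollary sharp in terms of the upper bound on $p^{+}$, and it explains why no further room is needed between $p^{+}$ and $\frac{p_{0}}{1-\delta}$.
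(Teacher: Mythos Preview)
Your proposal is correct and follows essentially the same route as the paper: choose $p_{\ast}$ with $\frac{p_{0}}{1+\delta(p_{0}-1)}<p_{\ast}<p^{-}$, verify that $\delta_{\ast}(p(\cdot)/p_{\ast})'\in\P_{\log}$ with infimum strictly above $1$, and then invoke Corollary~\ref{cor1.6}. Your direct verification that $r^{-}>1$ is equivalent to $p^{+}<\frac{p_{0}}{1-\delta}$ is in fact slightly cleaner than the paper's version, which introduces an auxiliary $\varepsilon>0$ with $p^{+}<\frac{p_{0}}{1-\delta}-\varepsilon$ to reach the same conclusion.
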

\begin{proof} Fix $p_{\ast}$ and positive small $\varepsilon$ such that
$\frac{p_{0}}{1+\delta(p_{0}-1)}<p_{\ast}<p^{-}$ and
$p^{+}<p_{0}/(1-\delta)-\varepsilon.$ We have

$$
\frac{p_{0}-p_{\ast}(1-\delta)}{p_{0}}(p(\cdot)/p_{\ast})'\geq\frac{p_{0}-p_{\ast}(1-\delta)}{p_{0}}\left(\frac{p_{0}/(1-\delta)-\varepsilon)}{p_{\ast}}\right)'
$$
$$
=\frac{p_{0}-p_{\ast}(1-\delta)}{p_{0}}\cdot\frac{p_{0}-\varepsilon(1-\delta)}{p_{0}-\varepsilon(1-\delta)-(1-\delta)p_{\ast}}>1.
$$
It is not hard to proof
$\frac{p_{0}-p_{\ast}(1-\delta)}{p_{0}}(p(\cdot)/p_{\ast})'\in\P_{\log}$.
From Corollary 1.6 we obtain desired result.
\end{proof}

\

\textbf{Singular integrals with rough  kernels}. 

We obtain boundedness of the  singular integral operator with "rough" kernel in variable exponent Lebesgue space we need the weighted inequalities. 
 \begin{thm}\cite{W} \label{thm3.2} Let $n\geq2$, $1<r<\infty$ and  let $Tf(x)=p.v.K\ast f(x)$
be singular integral operator with "rough"  kernel
$$
K(x)=h(|x|)\frac{\Omega(x)}{|x|^{n}},
$$
where $\Omega$ is homogeneous of degree $0$ on $\mathbb{R}^{n}$,
$\Omega\in L^{r}(S^{n-1}),$ where  $S^{n-1}$ denote the unit sphere in $\mathbb{R}^{n}$.  $\Omega$ has average $0$
on $ S^{n-1},$ and $h$ is a measurable function on $(0,\infty)$
satisfying
$$
\int_{R}^{2R}|h(t)|^{r}dt\leq CR\,\,\,\,\,\,\,\mbox{for
all}\,\,\,R>0.
$$
Then $T$ is bounded on
$L^{p}(w)(\mathbb{R}^{n})$,
$$ \|Tf\|_{p,w}\le C\|f\|_{p,w},$$
 in each of the following situations: 

(A) if $r'\leq p<\infty,$ and $w\in A_{\frac{p}{r'}},$ or 

(B) if  $1<p\le r, p\not=\infty$ and $w^{\frac{-1}{p-1}}\in A_{\frac{p'}{r'}}$, or  

(C) if $1<p<\infty$ and $w^{r'}\in A_{p}.$ 
\end{thm}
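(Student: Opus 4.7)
The plan is to follow the classical strategy for rough singular integrals, due essentially to Duoandikoetxea and Rubio de Francia: dyadically decompose the kernel, establish both Fourier transform decay and a pointwise maximal-function bound on each annular piece, and combine these via weighted interpolation and extrapolation.

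First I would write $T = \sum_{k\in \mathbb{Z}} T_k$, where $T_k$ is the convolution operator with kernel $K_k(x) = K(x)\chi_{\{2^k \le |x|<2^{k+1}\}}(x)$. The block hypothesis on $h$ together with $\Omega\in L^r(S^{n-1})$ gives $\|K_k\|_1\le C$ uniformly in $k$. Using the zero-average condition on $\Omega$, one obtains Fourier decay of the form $|\widehat{K_k}(\xi)|\le C \min\bigl((2^k|\xi|)^\alpha,(2^k|\xi|)^{-\beta}\bigr)$ for suitable $\alpha,\beta>0$, which yields the unweighted $L^2$ bound by Plancherel and almost orthogonality. Independently, Hölder's inequality on each annulus, exploiting $\Omega\in L^r(S^{n-1})$ and the block condition on $h$, produces the pointwise estimate $|T_k f(x)|\le C\, M_{r'}f(x)$, where $M_{r'}f=(M(|f|^{r'}))^{1/r'}$.

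Case (A) is the core case. Since $M_{r'}$ is bounded on $L^p(w)$ precisely when $w\in A_{p/r'}$, the pointwise bound gives a uniform estimate $\|T_k f\|_{p,w}\le C\|f\|_{p,w}$. Interpolating this with the unweighted $L^2$-decay (via Rubio de Francia-type extrapolation to transfer the decay into the weighted setting) upgrades the uniform bound to geometric decay $\|T_k f\|_{p,w}\le C\gamma^{|k|}\|f\|_{p,w}$ with $\gamma<1$, and summing in $k$ completes case (A). Case (B) then follows by duality: the adjoint $T^*$ has a kernel of the same form, and the condition $w^{-1/(p-1)}\in A_{p'/r'}$ is exactly the hypothesis of case (A) applied to $T^*$ acting on $L^{p'}(w^{1-p'})$. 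Case (C) reduces to case (A) by the reverse Hölder inequality: the stronger hypothesis $w^{r'}\in A_p$ is equivalent to $w\in A_{p/r'}\cap RH_s$ for an appropriate $s$, giving an integrability surplus that brings us back to case (A).

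The main obstacle is converting the unweighted Fourier decay on $\widehat{K_k}$ into a genuine weighted geometric decay for $T_k$, since Plancherel is unavailable outside $L^2$. The standard remedy is to interpolate between the weighted uniform $M_{r'}$-bound and the sharp unweighted $L^2$-decay, accepting a small loss in the decay rate that still leaves something geometric; this interpolation must be arranged so that the $A_{p/r'}$-class of the weight is preserved across the argument, which is where Rubio de Francia extrapolation, applied to a suitable class of $A_2$-weights together with the $L^2$-decay, provides the cleanest path.
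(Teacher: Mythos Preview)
The paper does not prove Theorem~\ref{thm3.2}; it is quoted verbatim from Watson \cite{W} and used only as a black box to derive the subsequent corollaries. So there is no ``paper's own proof'' to compare your proposal against.

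That said, your outline for cases (A) and (B) is essentially the Duoandikoetxea--Rubio de Francia/Watson strategy and is sound as a sketch. The reduction you give for case (C), however, is incorrect. You claim $w^{r'}\in A_p$ is equivalent to $w\in A_{p/r'}\cap RH_s$, but the Johnson--Neugebauer equivalence (with $\delta=1/r'$) actually yields $w\in A_{1+(p-1)/r'}\cap RH_{r'}$, and $1+(p-1)/r'>p/r'$, so this is a strictly larger Muckenhoupt class than $A_{p/r'}$ and does \emph{not} place you back in the hypothesis of (A). More decisively, case (C) covers the full range $1<p<\infty$, including $p<r'$, where $A_{p/r'}$ is not even defined, so no reduction to (A) alone can work there. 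In Watson's argument case (C) is obtained by interpolating between an (A)-type endpoint and a (B)-type endpoint (equivalently, by using the $M_{r'}$ control on both $f$ and the dual side simultaneously), exploiting that $w^{r'}\in A_p$ gives good behaviour in both directions. Your sketch would need to be amended accordingly.
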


for power weight was consider in \cite{Du}.

\

The following result concerning singular integral operator with "rough" kernel is known.
\begin{corollary} $($\cite{CUFMP}$)$
Let $p(\cdot)\in \mathcal{P}_{\log}$, $1<r<\infty$ and $r'<p^{-}.$ Then singular integral operator with "rough" kernel
$T$ is bounded on $L^{p(\cdot)}(\mathbb{R}^{n}).$
\end{corollary}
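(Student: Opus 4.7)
The plan is to apply Corollary \ref{cor3.1} to the family $\mathcal{F}=\{(|Tf|,|f|)\}$, which means I first need to produce a weighted $L^{p_0}$ estimate for $T$ of the form required by its hypothesis. Of the three weighted estimates in Theorem \ref{thm3.2}, condition (C) is the most convenient: its weight class is phrased as $w^{r'}\in A_p$, which matches the $w_0^{\delta}$ weights that govern Corollary \ref{cor3.1} after a single change of variable. I would therefore set $\delta=1/r'$, which lies in $(0,1)$ because $r>1$.

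For any $p_0>1$ and any $w_0\in A_{p_0}$ (both to be pinned down below), the weight $w=w_0^{1/r'}=w_0^{\delta}$ satisfies $w^{r'}=w_0\in A_{p_0}$, so Theorem \ref{thm3.2}(C) yields
\begin{equation*}
\int_{\Rn}|Tf(x)|^{p_0}w_0(x)^{\delta}\,dx \leq C\int_{\Rn}|f(x)|^{p_0}w_0(x)^{\delta}\,dx,
\end{equation*}
with $C$ depending only on $p_0$ and $[w_0]_{A_{p_0}}$. This is precisely the hypothesis of Corollary \ref{cor3.1} for the family $\mathcal{F}$.

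It remains to verify the range condition $\frac{p_0}{1+\delta(p_0-1)}<p^{-}\leq p^{+}<\frac{p_0}{1-\delta}$. With $\delta=1/r'$ the upper bound reads $p_0 r$, which exceeds $p^{+}$ as soon as $p_0>p^{+}/r$; such a $p_0$ exists because $p(\cdot)\in\mathcal{P}_{\log}$ forces $p^{+}<\infty$. The lower bound rewrites as $\frac{p_0 r'}{r'+p_0-1}$, which as a function of $p_0$ is strictly increasing on $(1,\infty)$ with supremum $r'$; hence it is always less than $r'<p^{-}$, and the lower inequality is automatic from the hypothesis $r'<p^{-}$. With $\delta$ and $p_0$ selected in this way, Corollary \ref{cor3.1} gives $\|Tf\|_{p(\cdot)}\leq C\|f\|_{p(\cdot)}$.

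The one point that needs care beyond these algebraic checks is the a priori finiteness of the left-hand side that is built into the extrapolation statement, and this is where I expect the only genuine friction. I would dispose of it in the usual way: truncate $T$ (for instance by restricting its kernel to annuli $\varepsilon<|x-y|<1/\varepsilon$), apply the extrapolation argument to $f\in L^{\infty}_{c}(\Rn)$ where everything is finite by Theorem \ref{thm3.2} itself, and then pass to the limit via Fatou's lemma before extending to general $f\in L^{p(\cdot)}(\Rn)$ by density.
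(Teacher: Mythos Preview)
Your argument is correct, but it takes a different route from the paper. The paper fixes $r'<p_0<p^{-}$, notes that $A_1\subset A_{p_0/r'}$, invokes Theorem~\ref{thm3.2}(A) to obtain $\|Tf\|_{p_0,w}\le C\|f\|_{p_0,w}$ for all $w\in A_1$, and then applies the classical extrapolation Theorem~\ref{thm1.1} directly. You instead use Theorem~\ref{thm3.2}(C) with $\delta=1/r'$ to produce an $A_{p_0}$-weighted estimate with weight $w_0^{\delta}$, and then feed this into the paper's new restricted-range result, Corollary~\ref{cor3.1}. Both routes reach the same conclusion; the paper's is shorter because this particular corollary (attributed to \cite{CUFMP}) does not need the restricted-range machinery---the hypothesis $r'<p^{-}$ already puts you in a situation where ordinary $A_1$-extrapolation suffices. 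Your approach has the virtue of illustrating that Corollary~\ref{cor3.1} recovers the known result, and your range computations (the lower endpoint $\frac{p_0 r'}{r'+p_0-1}$ increasing to $r'$, the upper endpoint $p_0 r$) are accurate. Your closing remark on a priori finiteness via truncation and density is more careful than what the paper records.
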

\begin{proof} Let $r'<p_{0}<p^{-}$.  As $A_1\subset A_{\frac{p_0}{r'}}$, by Theorem \ref{thm3.2}(A)
$$ \|Tf\|_{p_0,w}\le C\|f\|_{p_0,w},\quad \text{for every}\quad   w\in A_1.$$
 Using Theorem \ref{thm1.1} for $(|Tf|,|f|)$, we obtain that 
  $$ \|Tf\|_{p(\cdot)}\le C\|f\|_{p(\cdot)}.$$
\end{proof}

\

\begin{corollary} 
Let $p(\cdot)\in \mathcal{P}_{\log}$, $1<r<\infty$ and $p^{+}<r.$ Then  singular integral operator with "rough" kernel
$T$ is bounded on $L^{p(\cdot)}(\mathbb{R}^{n}).$
\end{corollary}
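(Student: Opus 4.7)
The plan is to apply Theorem~\ref{thm1.3} to the family $\mathcal{F}=\{(|Tf|,|f|)\}$, with extrapolation exponent $p_0$ chosen in $(1,p^-)$ and $\delta = 1/r'$. First, I fix such a $p_0$ — which is possible because $p(\cdot)\in\mathcal{P}_{\log}$ forces $p^->1$ — and observe that with $\delta=1/r'$ one has $p_0/(1-\delta)=p_0 r$; since $p_0>1$ and $p^+<r$ (our hypothesis), the range condition $p_0<p^-\le p^+<p_0/(1-\delta)$ required by Theorem~\ref{thm1.3} holds automatically.

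Next I transfer Theorem~\ref{thm3.2}(C) into the $A_1$-based form demanded by Theorem~\ref{thm1.3}. For any $w\in A_1\subset A_{p_0}$, set $u=w^{1/r'}$; then $u^{r'}=w\in A_{p_0}$, so Theorem~\ref{thm3.2}(C) applied with $p=p_0$ and weight $u$ gives
$$\int_{\mathbb{R}^n}|Tf(x)|^{p_0}w(x)^{\delta}\,dx\le C\int_{\mathbb{R}^n}|f(x)|^{p_0}w(x)^{\delta}\,dx,\qquad w\in A_1,$$
which is precisely the extrapolation hypothesis of Theorem~\ref{thm1.3}.

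Then I verify $\delta(p(\cdot)/p_0)'\in\mathcal{B}(\mathbb{R}^n)$. Explicitly,
$$\delta\bigl(p(x)/p_0\bigr)'=\frac{1}{r'}\cdot\frac{p(x)}{p(x)-p_0},$$
which is globally log-H\"older continuous (inherited from $p(\cdot)\in\mathcal{P}_{\log}$ together with $p^->p_0$, keeping the denominator bounded away from $0$). Its essential infimum equals $\frac{p^+}{r'(p^+-p_0)}$, and this is $>1$ iff $p_0>p^+/r$, which follows from $p_0>1>p^+/r$; its essential supremum is $\frac{p^-}{r'(p^--p_0)}<\infty$ since $p_0<p^-$. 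Hence $\delta(p(\cdot)/p_0)'$ lies in $\mathcal{P}_{\log}$ with lower index strictly greater than $1$, so it belongs to $\mathcal{B}(\mathbb{R}^n)$.

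Applying Theorem~\ref{thm1.3} to the family $\mathcal{F}$ now yields $\|Tf\|_{p(\cdot)}\le C\|f\|_{p(\cdot)}$. The only subtlety, as in the previous corollary, is the a priori finiteness required by the convention of Theorem~\ref{thm1.3}; this is handled in the standard way by first establishing the inequality on bounded compactly supported $f$ (for which $Tf$ is well-defined and locally in $L^{p_0}$) and then extending by density in $L^{p(\cdot)}(\mathbb{R}^n)$. The one technical point to spot is the pairing $\delta=1/r'$ between the power $r'$ in Theorem~\ref{thm3.2}(C) and the restricted-range parameter $\delta$ in Theorem~\ref{thm1.3}; once this correspondence is identified, the remainder of the argument is bookkeeping on ranges and log-H\"older regularity.
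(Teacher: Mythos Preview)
Your proof is correct, but it follows a different path from the paper's. The paper invokes Theorem~\ref{thm3.2}(B) and, via the $A_p$ duality $v\in A_s \Leftrightarrow v^{1-s'}\in A_{s'}$, rewrites the condition $w^{-1/(p_0-1)}\in A_{p_0'/r'}$ as $w^{r/(r-p_0)}\in A_{s'}$; taking $w_0:=w^{r/(r-p_0)}\in A_1$ then yields the extrapolation hypothesis with $\delta=(r-p_0)/r$, for which $p_0/(1-\delta)=r$ matches the assumption $p^+<r$ exactly. You instead use Theorem~\ref{thm3.2}(C) directly: setting $u=w^{1/r'}$ with $w\in A_1\subset A_{p_0}$ gives the $A_1$ hypothesis with $\delta=1/r'=(r-1)/r$, for which $p_0/(1-\delta)=p_0 r>r$, so the range condition $p^+<p_0/(1-\delta)$ is satisfied with room to spare. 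Your route is cleaner---it sidesteps the $A_p$ duality computation entirely---at the minor cost of a non-sharp upper threshold; the paper's choice of $\delta$ is tight to the hypothesis but requires the extra algebraic step. Both feed identically into Theorem~\ref{thm1.3}, and your verification that $\delta(p(\cdot)/p_0)'\in\mathcal{P}_{\log}$ with infimum $>1$ (equivalently $p_0>p^+/r$, guaranteed by $p^+<r<p_0 r$) is correct.
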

\begin{proof}
Let $1<p_0<p^{-}$.   By Theorem \ref{thm3.2}(B)
$$ \|Tf\|_{p_0,w}\le C\|f\|_{p_0,w},$$
for every   $w^{-\frac{1}{p_0-1}}\in A_{\frac{p_0'}{r'}}$. As we know  $w^{-\frac{1}{p_0-1}}\in A_{\frac{p_0'}{r'}}$ if and only if  $w^{\frac{r}{r-p_0}}\in A_{\frac{p_0{r-1}}{r'}}$. Using $A_1\subset A_{\frac{p_0{r-1}}{r'}}$ we have
 $$
\int_{\mathbb{R}^{n}}Tf(x)^{p_{0}}w_{0}^{\delta}(x)dx\leq
C_0\int_{\mathbb{R}^{n}}g(x)^{p_{0}}w_{0}^{\delta}(x)dx, \quad \text{for every} \quad  w_0\in A_1,
$$
 where $\delta=\frac{r-p_0}{r}$. 
 
 It is easy to see that $\frac{p}{1-\delta}=r$ and $ \delta(p^{+}/p_0)^{'}>1$ if $p^{+}<r$. Therefore, $\delta(p(\cdot)/p_0)^{'}\in \B(\Rn)$  and using Theorem \ref{thm1.3} for $(|Tf|,|f|)$, we obtain that 
  $$ \|Tf\|_{p(\cdot)}\le C\|f\|_{p(\cdot)}.$$
 \end{proof}

\begin{corollary}
Let $p(\cdot)\in \mathcal{P}_{\log}$, $1<r<\infty$  and for some  $p_0$, $1<p_{0}<\infty$, 
$\frac{p_{0}r}{r+(r-1)(p_{0}-1)}<p^{-}\leq p^{+}<rp_{0}.$ Then  singular integral operator with "rough" kernel
 $T$ is bounded on $L^{p(\cdot)}(\mathbb{R}^{n}).$
\end{corollary}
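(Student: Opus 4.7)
The plan is to reduce the statement to Corollary~\ref{cor3.1} by extracting the appropriate weighted inequality from Theorem~\ref{thm3.2}(C) with a judicious choice of $\delta$. Since $1<r<\infty$, set $\delta=1/r'$, so that $0<\delta<1$. For any $w_0 \in A_{p_0}$, define $w:=w_0^{1/r'}$; then $w^{r'}=w_0\in A_{p_0}$, and Theorem~\ref{thm3.2}(C) gives
$$
\int_{\Rn}|Tf(x)|^{p_0} w_0(x)^{\delta}\,dx \le C\int_{\Rn}|f(x)|^{p_0}w_0(x)^{\delta}\,dx
$$
for every $w_0\in A_{p_0}$. This matches the hypothesis of Corollary~\ref{cor3.1} applied to the family $\mathcal{F}=\{(|Tf|,|f|) : f\in L^{p(\cdot)}(\Rn)\}$.

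Next I would verify that the range condition in Corollary~\ref{cor3.1} is exactly what is assumed in the present corollary. Using $1/r'=(r-1)/r$, a direct calculation gives
$$
\frac{p_0}{1-\delta}=\frac{p_0}{1-(r-1)/r}=rp_0,
\qquad
\frac{p_0}{1+\delta(p_0-1)}=\frac{p_0\,r}{r+(r-1)(p_0-1)},
$$
so the assumption $\frac{p_0 r}{r+(r-1)(p_0-1)}<p^{-}\le p^{+}<rp_0$ is precisely the range constraint $\frac{p_0}{1+\delta(p_0-1)}<p^{-}\le p^{+}<\frac{p_0}{1-\delta}$ required by Corollary~\ref{cor3.1}. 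Combined with $p(\cdot)\in\mathcal{P}_{\log}$, Corollary~\ref{cor3.1} immediately yields $\|Tf\|_{p(\cdot)}\le C\|f\|_{p(\cdot)}$.

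The main point is really just the bookkeeping of identifying the correct $\delta$, and the proof is almost a tautology once the translation $\delta=1/r'$ is made: the three clauses (A), (B), (C) of Theorem~\ref{thm3.2} correspond, respectively, to the three preceding corollaries via $A_1 \subset A_{p_0/r'}$, the $A_p$--$RH_s$ duality (as in the remark after Corollary~\ref{cor1.6}), and this final $w^{r'}\in A_p$ reparametrisation. The only potentially non-routine step is verifying that the finiteness assumption on the left-hand side in the extrapolation theorem is not vacuous, i.e.\ that the inequality can be applied to a genuine pair $(|Tf|,|f|)$ with $|Tf|\in L^{p(\cdot)}$; this is handled by standard truncation of the kernel of $T$ and taking limits, exactly as in \cite{CUFMP}.
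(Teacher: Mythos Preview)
Your proof is correct and follows essentially the same approach as the paper's: both invoke Theorem~\ref{thm3.2}(C) to obtain the weighted $L^{p_0}$ inequality and then apply Corollary~\ref{cor3.1} to the pair $(|Tf|,|f|)$. The paper records $\delta=\tfrac{1}{r-1}$, but your value $\delta=1/r'=(r-1)/r$ is the one that actually reproduces the stated range $\tfrac{p_0 r}{r+(r-1)(p_0-1)}<p^-\le p^+<rp_0$, so your bookkeeping is in fact cleaner.
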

\begin{proof} Let $1<p_{0}<\infty$.  by Theorem \ref{thm3.2}(C)
$$ \int_{\Rn}|Tf(x)|^{p_0}w(x)^{\delta}dx\le C\int_{\Rn}|f(x)|w(x)^{\delta}(x)dx,\quad \text{for every}\quad   w\in A_{p_0},$$
 where $\delta=\frac{1}{r-1}$.
 Using Corollary \ref{cor3.1} for $(|Tf|,|f|)$, we get 
  $$ \|Tf\|_{p(\cdot)}\le C\|f\|_{p(\cdot)}.$$
\end{proof}

\
 
\textbf{Strongly singular integrals}. Let $\theta(\xi)$ be a smooth
radial cut-off function $\theta(\xi)=1$ if $|\xi|\geq1$ and
$\theta(\xi)=0$ if $|\xi|\leq1/2.$ We will consider the multipliers
$$
\widehat{T_{b,a}f}(\xi)=\theta(\xi)\frac{e^{i|\xi|^{b}}}{|\xi|^{a}}\widehat{f}(\xi),
$$
where $0<b<1$ and $0<a<nb/2.$ Fefferman \cite{Fe} proved that if
$0<a<nb/2,$ then for $p,$ such that $|1/p-1/2|\leq a/nb,$ then
$$
\|T_{b,a}\|_{p}\leq c_{p}\|f\|_{p}.
$$

The weighted extension of Fefferman's theorem was obtained by
Chanillo \cite{Ch}. Indeed If $\alpha=nb|1/p-1/2|,$ and $w\in
A_{p},$ then for $1<p<\infty,$ $\alpha\leq a\leq nb/2,$ and for
$\gamma,$ such that $\gamma=(a-\alpha)/(nb/2-\alpha)$ we have
$$
\|T_{b,a}f\|_{p,w^{\gamma}}\leq C_{p}\|f\|_{p,w^{\gamma}}.
$$

Using the Corollary \ref{cor3.1} we obtain
\begin{corollary}
Let $p(\cdot)\in \mathcal{P}_{\log}$. Let for some $1<p_0<\infty$, $\alpha=nb|1/p_0-1/2|,$ $\alpha\leq a\leq nb/2,$ and $\gamma=(a-\alpha)/(nb/2-\alpha)$,   
$\frac{p_{0}}{1+\gamma(p_{0}-1)}<p^{-}\leq
p^{+}<\frac{p_{0}}{1-\gamma}$. Then operator $T_{b,a}$ is bounded on
$L^{p(\cdot)}(\mathbb{R}^{n}).$
\end{corollary}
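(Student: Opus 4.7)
The plan is to apply Corollary \ref{cor3.1} directly to the pair family $\mathcal{F}=\{(|T_{b,a}f|,|f|)\}$, using Chanillo's weighted estimate as the seed inequality with $\delta$ in the role of $\gamma$.

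First, I would fix the parameter $p_0 \in (1,\infty)$ given in the hypothesis, and set $\alpha = nb|1/p_0-1/2|$ and $\delta = \gamma = (a-\alpha)/(nb/2 - \alpha)$. Since $\alpha \le a \le nb/2$, we have $0 \le \delta \le 1$; the edge cases $\delta = 0$ and $\delta = 1$ degenerate the hypothesis on $p^\pm$ (either forcing $p^- > p_0$ with no upper bound, or giving an empty range), so I would restrict to $0 < \delta < 1$, which is exactly the setting required by Corollary \ref{cor3.1}. Chanillo's theorem (\cite{Ch}) then gives, for every $w \in A_{p_0}$,
$$
\int_{\Rn} |T_{b,a}f(x)|^{p_0} w(x)^{\delta}\,dx \le C\int_{\Rn} |f(x)|^{p_0} w(x)^{\delta}\,dx,
$$
so the family $\mathcal{F}$ satisfies the weighted hypothesis of Corollary \ref{cor3.1} with parameters $p_0$ and $\delta$.

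Next, the hypothesis on $p(\cdot)$ in our corollary reads
$$
\frac{p_0}{1+\delta(p_0-1)} < p^- \le p^+ < \frac{p_0}{1-\delta},
$$
together with $p(\cdot)\in \P_{\log}$, which is precisely what Corollary \ref{cor3.1} requires. Applying that corollary to the pair $(|T_{b,a}f|,|f|)$ immediately yields
$$
\|T_{b,a}f\|_{p(\cdot)} \le C\|f\|_{p(\cdot)}
$$
for every $f\in L^{p(\cdot)}(\Rn)$ such that $T_{b,a}f\in L^{p(\cdot)}(\Rn)$.

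The only issue requiring care is the standard extrapolation caveat that the left-hand side be a priori finite, in order to legitimately feed $(|T_{b,a}f|,|f|)$ into the extrapolation machinery. I would handle this by the usual truncation argument: apply the estimate first to $f_N = f\chi_{B(0,N)}\chi_{\{|f|\le N\}}$, for which $T_{b,a}f_N$ is bounded and has sufficient decay to lie in $L^{p(\cdot)}$, obtain the uniform bound $\|T_{b,a}f_N\|_{p(\cdot)} \le C\|f_N\|_{p(\cdot)} \le C\|f\|_{p(\cdot)}$, and pass to the limit via Fatou's lemma in the variable Lebesgue norm. This step is routine and parallels the treatment already used for the rough-kernel corollaries; no new obstruction arises here, so the whole proof reduces to the one-line application of Corollary \ref{cor3.1}.
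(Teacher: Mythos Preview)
Your proposal is correct and matches the paper's approach exactly: the paper simply states ``Using the Corollary~\ref{cor3.1} we obtain'' and gives no further argument, so the proof is precisely the one-line application of Corollary~\ref{cor3.1} to Chanillo's weighted estimate with $\delta=\gamma$ that you describe. Your additional remarks on the endpoint cases $\gamma\in\{0,1\}$ and the truncation to ensure a priori finiteness are reasonable elaborations beyond what the paper spells out.
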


\

\textbf{Fractional maximal functions related to spherical means.}
Denote by $\mu_{t}$ the normalized surface measure on the sphere in
$\mathbb{R}^{n}$ with center $0$ and radius $t.$ The maximal
operator related to spherical means is given by
$$
\mathcal{M}^{\alpha}=\sup\limits_{t>0}|t^{\alpha}\mu_{t}\ast f|.
$$
In  paper \cite{CGCG} the authors investigate weighted
$L^{p}\rightarrow L^{q}$ estimate for the maximal operators
$\mathcal{M}^{\alpha}.$

\begin{thm} \label{thm3.5}$($\cite{CGCG}$)$ Suppose that $n/n-1<p<q<n,\,\,n>2,$
that $\alpha=n/p-n/q,$ and that $\max\{0,1-q/p'\}<\gamma\leq1-q/n.$
Suppose also that $w$ is in $A_{s},$ where
$$
s=\frac{q+2p'\gamma-p'}{p'\gamma}.
$$
Then there exists a constant $C$ such that
$$
\|\mathcal{M}^{\alpha}f\|_{q,w^{\gamma}}\leq C
\|f\|_{p,w^{p\gamma/q}}.
$$
\end{thm}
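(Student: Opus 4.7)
Since Theorem~\ref{thm3.5} is a two-weight $L^p\to L^q$ estimate for the spherical fractional maximal operator and no variable exponent enters here, my strategy would combine Fourier analysis of the spherical measure with Muckenhoupt weight theory. The starting point is the Fourier decay $|\widehat{\mu_t}(\xi)|\lesssim (1+t|\xi|)^{-(n-1)/2}$, a consequence of stationary phase, which underlies Stein's spherical maximal theorem in dimension $n\ge 3$ (matching the hypothesis $n>2$).

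First I would dyadically decompose the supremum: setting
$$
\mathcal{M}_j f(x) = \sup_{t\in[2^j,2^{j+1}]} |\mu_t \ast f(x)|, \qquad j\in\mathbb{Z},
$$
one has $\mathcal{M}^\alpha f(x)\lesssim \sup_j 2^{j\alpha}\mathcal{M}_j f(x)$. Within each dyadic scale I would introduce a Littlewood--Paley decomposition in frequency, splitting $\mu_t = \sum_{k\ge 0}\mu_t^{(k)}$, where $\widehat{\mu_t^{(k)}}$ localises to $t|\xi|\sim 2^k$ (with $k=0$ absorbing $t|\xi|\lesssim 1$). The $k=0$ piece is pointwise dominated by the Hardy--Littlewood maximal operator, whose weighted $L^p\to L^q$ bounds with weight pair $(w^{p\gamma/q},w^\gamma)$ follow from standard $A_s$ theory. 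For $k\ge 1$ the Fourier decay endows $\mu_t^{(k)}$ with smoothness of order $(n-1)/2$, so convolution with $\mu_t^{(k)}$ is essentially a kernel of compact support carrying an extra gain of $2^{-k(n-1)/2}$.

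The heart of the argument is then to produce a weighted $L^p(w^{p\gamma/q})\to L^q(w^\gamma)$ bound for each frequency piece with a constant growing more slowly than $2^{k(n-1)/2}$, so that the sum in $k$ converges; a natural route is to treat each piece as a Bochner--Riesz-type multiplier and invoke known weighted estimates for such multipliers in the stated $A_s$ class. The dyadic sum in $j$ is then controlled by the condition $\gamma\le 1-q/n$, which, together with $\alpha=n/p-n/q$, makes the homogeneous scaling work out in favour of convergence.

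The main obstacle, and the reason the specific exponent $s=(q+2p'\gamma-p')/(p'\gamma)$ appears, is the sharp interpolation between the low-frequency weighted maximal bound and the high-frequency weighted smoothing estimate. The lower bound $\gamma>\max\{0,1-q/p'\}$ is needed to keep this interpolation non-degenerate, and the upper bound $\gamma\le 1-q/n$ is dictated by the $L^p\to L^q$ mapping properties on the high-frequency side. Tracking the $A_s$ class precisely through this interpolation is what I expect to be the delicate step --- essentially the same phenomenon as in Chanillo's weighted bound for strongly singular integrals discussed earlier in the paper.
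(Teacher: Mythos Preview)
The paper does not prove Theorem~\ref{thm3.5}; it is quoted verbatim from \cite{CGCG} and used as a black box to derive the subsequent corollary via the extrapolation Theorem~\ref{thm1.4}. There is therefore no proof in the paper to compare your proposal against.

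Your sketch is a plausible outline of the Cowling--Garc\'{\i}a-Cuerva--Gunawan argument in spirit (dyadic decomposition in $t$, Littlewood--Paley decomposition in frequency, exploiting the decay $|\widehat{\mu_t}(\xi)|\lesssim (t|\xi|)^{-(n-1)/2}$, and interpolation), but several points are imprecise. In particular, your explanation of the roles of the constraints is reversed: the condition $\gamma\le 1-q/n$ together with $\alpha=n/p-n/q$ is what makes the \emph{scaling} neutral (so the $j$-sup is harmless), not what controls the $k$-sum; the $k$-sum is governed by the interpolation that produces the specific $A_s$ exponent. You also invoke ``known weighted estimates for Bochner--Riesz-type multipliers'' at the key step, which is circular at the level of this paper since those are exactly the kind of restricted-range weighted bounds under discussion. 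If you intend to supply an actual proof, you would need to make the interpolation explicit --- typically between an unweighted $L^2$ estimate with full decay $2^{-k(n-1)/2}$ and a weighted $L^q(w)$ estimate (for $w\in A_1$ or $A_q$) with controlled growth in $k$ --- and track how the $A_s$ condition on $w$ translates into the required $A_1$/$A_q$ condition on $w^\gamma$ via the Johnson--Neugebauer equivalence. As written, the proposal is a roadmap rather than a proof.
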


From Theorem \ref{thm1.4} and Theorem \ref{thm3.5} we obtain
\begin{corollary}
Let $0<\alpha<n-2,\,\,n>2,$ $p(\cdot)\in \mathcal{P}_{\log} $ and
$n/(n-1)<p^{-}\leq p^{+}<n/(1+\alpha).$ Define $q(\cdot)$ by
$$
\frac{1}{p(x)}-\frac{1}{q(x)}=\frac{\alpha}{n},\,\,\,x\in\mathbb{R}^{n}.
$$
Then
$$
\|\mathcal{M}^{\alpha}f\|_{q(\cdot)}\leq C\|f\|_{p(\cdot)}.
$$
\end{corollary}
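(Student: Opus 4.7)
The plan is to apply the off-diagonal extrapolation theorem (Theorem~\ref{thm1.4}) to the pair $(\mathcal{M}^{\alpha}f, f)$, using Theorem~\ref{thm3.5} together with the inclusion $A_1 \subset A_s$ to supply hypothesis \eqref{1.4}. First I would fix $p_0$ with $n/(n-1) < p_0 < p^{-}$, a non-empty range by assumption, and define $q_0$ by $1/p_0 - 1/q_0 = \alpha/n$, so that $q_0 = np_0/(n - \alpha p_0)$. The condition $p_0 < p^{+} < n/(1+\alpha)$ gives $q_0 < n$, and $p_0 > n/(n-1)$ gives $p_0' < n$, which together ensure that the interval $(\max\{0, 1 - q_0/p_0'\},\, 1 - q_0/n]$ is non-empty. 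The exponent $q(\cdot)$ from the corollary satisfies $1/p(x) - 1/q(x) = 1/p_0 - 1/q_0$, so the framework of Theorem~\ref{thm1.4} is in force.

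Next I would choose $\delta := 1 - q_0/n \in (0,1)$ as the value of $\gamma$ in Theorem~\ref{thm3.5}. For this $\delta$ the Muckenhoupt index $s = (q_0 + 2p_0'\delta - p_0')/(p_0'\delta)$ exceeds $1$, so $A_1 \subset A_s$. Theorem~\ref{thm3.5} then yields, for every $w \in A_1$,
$$
\left(\int_{\Rn} (\mathcal{M}^{\alpha}f)^{q_0} w^{\delta}\, dx\right)^{1/q_0}
\leq C \left(\int_{\Rn} f^{p_0} w^{\delta p_0/q_0}\, dx\right)^{1/p_0},
$$
which is precisely hypothesis \eqref{1.4} for the pair $(\mathcal{M}^{\alpha}f, f)$.

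It remains to verify the structural conditions of Theorem~\ref{thm1.4}. The algebraic identity
$$
\frac{p_0 q_0}{q_0 - \delta p_0} = \frac{n}{1+\alpha},
$$
which follows from $q_0 - p_0 = \alpha p_0 q_0 / n$ together with $\delta = 1 - q_0/n$, converts the assumption $p^{+} < n/(1+\alpha)$ of the corollary into the upper bound on $p^{+}$ demanded by Theorem~\ref{thm1.4}. For the condition $\delta(q(\cdot)/q_0)' \in \mathcal{B}(\Rn)$, note that $p(\cdot) \in \mathcal{P}_{\log}$ forces $q(\cdot) \in \mathcal{P}_{\log}$ via $1/q = 1/p - \alpha/n$, so the rational function $\delta q(\cdot)/(q(\cdot) - q_0)$ is also log-H\"older; the short computation
$$
\delta\left(\frac{q(x)}{q_0}\right)' - 1 = \frac{q_0(n - q(x))}{n(q(x) - q_0)} > 0
$$
is uniformly bounded below because $q^{+} < n$ and $q^{-} > q_0$, so $(\delta(q(\cdot)/q_0)')^{-} > 1$ and membership in $\mathcal{B}(\Rn)$ follows. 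Theorem~\ref{thm1.4} applied to $(\mathcal{M}^{\alpha}f, f)$ then gives the conclusion.

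The main obstacle is the sharp choice $\delta = 1 - q_0/n$: it is simultaneously the largest value admitted by Theorem~\ref{thm3.5} and the unique value that makes the Theorem~\ref{thm1.4} threshold $p_0 q_0/(q_0 - \delta p_0)$ coincide with $n/(1+\alpha)$. Any smaller $\delta$ would impose a strictly more restrictive upper bound on $p^{+}$, so verifying that all the numerical constraints can indeed be satisfied simultaneously by this one choice is the technical heart of the argument; once that is in place, the log-H\"older verification and the invocation of Theorem~\ref{thm1.4} are routine.
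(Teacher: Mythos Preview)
Your proof is correct and follows essentially the same route as the paper: fix $p_0\in(n/(n-1),p^-)$, set $q_0$ by $1/p_0-1/q_0=\alpha/n$, take $\delta=\gamma=1-q_0/n$ in Theorem~\ref{thm3.5}, use $A_1\subset A_s$ to obtain \eqref{1.4}, and then verify the numerical hypotheses of Theorem~\ref{thm1.4} (including the key identity $p_0q_0/(q_0-\delta p_0)=n/(1+\alpha)$ and the log-H\"older membership of $\delta(q(\cdot)/q_0)'$). Your presentation is in fact somewhat more careful than the paper's in checking that $\delta$ lies in the admissible $\gamma$-range of Theorem~\ref{thm3.5} and that $s>1$.
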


\begin{proof} Fix $ p_{\ast}$ such that $\frac{n}{n-1}<p_{\ast}<p^{-}$
 and define $q_{\ast}$ from equation $1/p_{\ast}-1/q_{\ast}=\alpha/n.$
Note that $n/(n-1)<p_{\ast}<q_{\ast}<n.$ From Theorem \ref{thm3.5} we obtain
$$
\|\mathcal{M}^{\alpha}f\|_{q_{\ast},w^{\gamma}}\leq C
\|f\|_{p_{\ast},w^{p_{\ast}\gamma/q_{\ast}}},
$$
for $w\in A_{1}$ and $\gamma,$ where
$$
\gamma=1-\frac{q_{\ast}}{n}=\frac{n-p_{\ast}\alpha-p_{\ast}}{n-p_{\ast}\alpha}.
$$
We have
$$
\gamma\left(\frac{q(x)}{q_{0}}\right)'=\frac{p(x)}{p(x)-p_{\ast}}\cdot\frac{n-\alpha
p_{\ast}}{n}>\frac{n}{n-(1+\alpha)p_{\ast}}\cdot\frac{n-\alpha
p_{\ast}}{n}>1.
$$
It is not hard to proof that
$\gamma\left(\frac{q(\cdot)}{q_{0}}\right)'\in \mathcal{P}_{\log}$
and  $\frac{p_{\ast}q_{\ast}}{q_{\ast}-\gamma
p_{\ast}}=\frac{n}{1+\alpha}.$ From Theorem \ref{thm1.4} we obtain desired
result.
\end{proof}

\

\begin{remark} In case $\alpha=0$ $\mathcal{M}^{0}$ is the well-known
Stein's spherical maximal operator. The
$L^{p(\cdot)}(\mathbb{R}^{n})\rightarrow
L^{p(\cdot)}(\mathbb{R}^{n})$ estimates for maximal operator
$\mathcal{M}^{0}$ was investigate in \cite{FGK}.
\end{remark}

\

\textbf{Bochner-Riesz operators.} The Bochner-Riesz operator in $\mathbb{R}^{n}$, $(n\geq 2)$ are defined for $\beta>0,$ as
$$
\widehat{T_{\beta}^{r}}(\xi)=\left(1-\frac{|\xi|^{2}}{r^{2}}\right)_{+}^{\beta}\widehat{f}(\xi)
$$
with $t_{+}=\max(t,0),$ and the maximal Bochner-Riesz operator is defined by
$$
T_{\beta}^{\ast}f(x)=\sup\limits_{r>0}|T_{\beta}^{r}f(x)|.
$$
\begin{thm} $($\cite{CDL}$)$
If $0<\beta<\frac{n-1}{2},$ then $T_{\beta}^{\ast}$ is bounded on $L^{2}(w^{\frac{2\beta}{n-1}})$ for $w\in A_{2}.$
\end{thm}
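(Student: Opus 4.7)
The plan is to establish the weighted $L^2$ bound for the maximal Bochner--Riesz operator $T_\beta^\ast$ via Stein's analytic interpolation in the parameter $\beta$, combined with change of measure (Stein--Weiss). Embed $T_\beta^r$ into the analytic family $\{T_z^r\}$ whose multiplier is $\theta(\xi)(1-|\xi|^2/r^2)_+^z / \Gamma(z+1)$ (the $\Gamma$ normalization is introduced so that the endpoint norms grow admissibly as $|\Im z|\to\infty$). To handle the supremum in $r$, linearize the maximal operator: pick a measurable function $r:\Rn\to(0,\infty)$ and study the linear operator $S_z f(x) := T_z^{r(x)}f(x)$, with bounds uniform in the choice of $r(\cdot)$.

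For the first endpoint, take $\Re z = (n-1)/2$. This is the critical Bochner--Riesz index, where the kernel of $T_z^r$ is of Calder\'on--Zygmund type uniformly in $r$. One then dominates $S_z$ pointwise by a finite sum of a CZ operator and the Hardy--Littlewood maximal operator, both of which are bounded on $L^2(w)$ for $w \in A_2$; this yields $\|S_z f\|_{L^2(w)} \le C(|\Im z|)\|f\|_{L^2(w)}$ with $C$ of polynomial growth in $|\Im z|$.

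For the second endpoint, take $\Re z = 0$. Here Plancherel's theorem gives $\|S_z f\|_{L^2(\Rn)} \le C(|\Im z|)\|f\|_{L^2(\Rn)}$ unweighted, exactly in the style of Fefferman's original bound on $T_\beta^\ast$; no weight is needed because the multiplier is bounded. Applying Stein's analytic interpolation with change of measure between these two endpoints on the strip $0\le \Re z\le (n-1)/2$, with weights $w$ and $1$ respectively, produces for any $\beta\in(0,(n-1)/2)$ the bound on $L^2(w^{\theta})$ where $\theta = \beta/((n-1)/2) = 2\beta/(n-1)$, which is exactly the claimed exponent. Finally, since the resulting constant is independent of the choice of the selector $r(\cdot)$, taking the supremum converts the linear inequality back into the desired bound on the sublinear operator $T_\beta^\ast$.

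The main obstacle is the careful quantitative control of the endpoint estimates along vertical lines $\Re z = 0$ and $\Re z = (n-1)/2$: one needs bounds whose growth in $|\Im z|$ is admissible for Stein's three-lines lemma, which in turn forces a careful analysis of the $\Gamma(z+1)$ normalization and of the Calder\'on--Zygmund constants at the critical index. Once the two endpoint inequalities are quantified in this way, the weighted conclusion drops out of Stein--Weiss interpolation, and no further weight-specific argument is needed beyond the $A_2$ hypothesis used at the critical endpoint.
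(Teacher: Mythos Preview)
The paper does not prove this theorem; it is quoted from \cite{CDL} as a black box and then fed into Corollary~\ref{cor3.1} to obtain the variable-exponent corollary that follows. So there is no proof in the paper to compare against.

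As a stand-alone argument, your proposal has a real gap at the $\Re z=0$ endpoint. You assert that Plancherel gives $\|S_{it} f\|_{L^{2}}\le C(|t|)\|f\|_{L^{2}}$ for the linearized operator $S_{z}f(x)=T_{z}^{r(x)}f(x)$. Plancherel only says that for each \emph{fixed} $r$ the multiplier $(1-|\xi|^{2}/r^{2})_{+}^{it}/\Gamma(1+it)$ is bounded, hence $T_{it}^{r}$ is bounded on $L^{2}$. The linearized operator $S_{it}$ is \emph{not} a Fourier multiplier, because the selector $r(x)$ varies with $x$; Plancherel has nothing to say about it. What you actually need at $\Re z=0$ is an $L^{2}$ bound for the maximal operator $\sup_{r}|T_{it}^{r}f|$, uniformly in the selector. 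For $t=0$ this is the maximal ball multiplier, a Carleson-type object far beyond Plancherel, and the imaginary-shifted versions with admissible growth in $|t|$ are not available off the shelf. This is exactly why the weighted theory for $T_{\beta}^{\ast}$ does not drop out of a straightforward Stein--Weiss interpolation; the argument in \cite{CDL} proceeds through square-function estimates for Bochner--Riesz means and their limited-range extrapolation machinery, bypassing the $\Re z=0$ endpoint.

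Your endpoint at $\Re z=(n-1)/2$ is essentially sound (pointwise domination by $Mf$, hence the $A_{2}$ weighted bound), modulo the usual care that the kernel decay is only borderline at that exact index; one typically shifts to $\Re z=(n-1)/2+\varepsilon$ and adjusts. A small cosmetic point: the cutoff $\theta(\xi)$ you inserted in the analytic family does not belong in the Bochner--Riesz multiplier; it is a leftover from the strongly singular integral section.
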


\begin{corollary}
 Let $0<\beta<\frac{n-1}{2}$,  $p(\cdot)\in \mathcal{P}_{\log} $ and $\frac{2(n-1)}{n-1+2\beta}<p^{-}\leq p^{+}<\frac{2(n-1)}{n-1-2\beta}.$
Then $T_{\beta}^{\ast}$ is bounded on $L^{p(\cdot)}(\mathbb{R}^{n}).$
\end{corollary}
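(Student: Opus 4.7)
The plan is to apply Corollary \ref{cor3.1} to the family $\mathcal{F}=\{(T_{\beta}^{\ast}f,|f|)\}$ with the parameter choices
$$
p_{0}=2, \qquad \delta=\frac{2\beta}{n-1}.
$$
Since $0<\beta<\frac{n-1}{2}$, we have $0<\delta<1$ and $1<p_{0}<\infty$, so the parameters are admissible for Corollary \ref{cor3.1}.

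First I would verify the weighted hypothesis. With this $\delta$, the weighted estimate of the cited theorem on $T_{\beta}^{\ast}$ reads
$$
\int_{\mathbb{R}^{n}}|T_{\beta}^{\ast}f(x)|^{2}\,w(x)^{\delta}\,dx \le C\int_{\mathbb{R}^{n}}|f(x)|^{2}\,w(x)^{\delta}\,dx,\qquad w\in A_{2},
$$
which is precisely the weighted inequality required by Corollary \ref{cor3.1} for the family $\mathcal{F}$.

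Next I would check the range condition on $p(\cdot)$. A direct computation gives
$$
\frac{p_{0}}{1+\delta(p_{0}-1)}=\frac{2}{1+\delta}=\frac{2(n-1)}{n-1+2\beta},\qquad \frac{p_{0}}{1-\delta}=\frac{2(n-1)}{n-1-2\beta},
$$
so the assumption $\frac{2(n-1)}{n-1+2\beta}<p^{-}\le p^{+}<\frac{2(n-1)}{n-1-2\beta}$ is exactly the required inequality
$\frac{p_{0}}{1+\delta(p_{0}-1)}<p^{-}\le p^{+}<\frac{p_{0}}{1-\delta}$. Since $p(\cdot)\in\mathcal{P}_{\log}$, Corollary \ref{cor3.1} immediately yields
$$
\|T_{\beta}^{\ast}f\|_{p(\cdot)}\le C\|f\|_{p(\cdot)}.
$$

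There is no real obstacle here: the entire argument is bookkeeping, matching the exponents of the Chanillo-type weighted bound to the template of the restricted-range extrapolation corollary. The only mild technicality that might need to be addressed is the standard a priori finiteness assumption on $T_{\beta}^{\ast}f$, which can be circumvented in the usual way by truncating $T_{\beta}^{\ast}f$ (e.g.\ by $\min(T_{\beta}^{\ast}f,N)\chi_{B(0,N)}$), applying the extrapolation to the truncations, and passing to the limit via Fatou's lemma.
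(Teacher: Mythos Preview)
Your proposal is correct and is exactly the argument the paper intends: apply Corollary~\ref{cor3.1} with $p_{0}=2$ and $\delta=\tfrac{2\beta}{n-1}$, using the weighted $L^{2}$ bound for $T_{\beta}^{\ast}$ (the Carro--Duoandikoetxea--Lorente result, not Chanillo) to supply the hypothesis. The exponent computations you carried out match the stated range precisely, and the paper leaves the corollary without proof because nothing more than this bookkeeping is required.
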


\end{document}